\documentclass[preprint,12pt]{elsarticle}

\usepackage{amssymb}
\usepackage{amsthm}
\usepackage{algorithm}
\usepackage{algpseudocode}
\usepackage{booktabs}
\usepackage{mathtools}
\usepackage{graphicx}
\usepackage{microtype}
\usepackage{url}

\DeclareMathOperator{\cone}{cone}
\DeclareMathOperator{\dual}{dual}

\DeclareMathOperator{\col}{col}
\DeclareMathOperator{\rank}{rank}

\newcommand{\Psucc}{P_{\mathrm{succ}}}
\newcommand{\Npool}{N_{\mathrm{pool}}}

\newtheorem{theorem}{Theorem}[section]
\newtheorem{proposition}[theorem]{Proposition}

\newtheorem{corollary}[theorem]{Corollary}
\theoremstyle{remark}
\newtheorem{remark}[theorem]{Remark}
\newtheorem{conjecture}[theorem]{Conjecture}
\theoremstyle{plain}

\newcommand{\R}{\mathbb{R}}
\newcommand{\Rp}{\R_{\ge 0}}
\newcommand{\Ao}{A_{\mathrm{orth}}}
\newcommand{\Aoo}{A_{\mathrm{orth}1}}
\newcommand{\Idn}{I}

\journal{Linear Algebra and its Applications}

\begin{document}

\begin{frontmatter}

\title{Exact Nonnegative Matrix Factorization via Cone-Ray Witnesses:
Certificates, a One-Sided Solver, and a Findability Phase
Transition\tnoteref{ack}}
\tnotetext[ack]{Software drafts, prose, and the Monte-Carlo characterisation
pipeline were prepared with the assistance of Anthropic's Claude (Opus~4.8);
the author bears full responsibility for the content.}

\author{Mithil Ramteke\fnref{gm}}
\ead{mramteke@qti.qualcomm.com}
\fntext[gm]{Alternate email: \texttt{mithilr@gmail.com}.}
\address{Qualcomm India Private Limited, India}

\begin{abstract}
We study exact nonnegative matrix factorization (NMF) of small
exact-rank-$r$ matrices through the polyhedral cones of nonnegative
preimages of the truncated SVD. Restricting each factor to an $r$-subset of
a cone's extreme rays collapses the factorization constraint to the
entrywise nonnegativity of a single $r\times r$ \emph{witness matrix};
feasibility of the witness is a \emph{certificate} that an exact size-$r$ NMF
exists, decided in one matrix inverse. A single-coupling completeness theorem
shows every size-$r$ NMF is representable this way, for every $m$, and a
one-sided relaxation gives a closed-form solver that provably dominates the
two-sided witness, with an explicit cone-geometric criterion for its strict
gain. Our main result concerns \emph{findability}: at a fixed search budget, witness
recoverability undergoes a sharp conic phase transition whose width collapses
to a step as $r$ grows. We rule out both a universal-constant explanation
(Goemans--Williamson) and a statistical-dimension explanation --- the cones'
statistical dimension is flat across the transition, while the intrinsic-volume
profile's \emph{variance} tracks it. The transition is not an existence
boundary: by completeness a ray-economical witness always exists and persists
past the threshold, so what decays with $m$ is its \emph{density} among
$r$-subsets. The threshold is therefore budget-relative --- moving
logarithmically as the candidate pool grows --- and combinatorial rather than
smooth-conic; a single budget law bridges this finite-budget threshold to the
budget-free existence limit. Obtuseness ranking surfaces a feasible witness near the top of
astronomically many subsets, and a two-sided union of one-sided relaxations
returns an exact, machine-precision factorization with a decidable certificate.
That search is itself budget-limited, so its recovery obeys the very transition
we study: competitive with heuristics on small instances, it demands a budget
growing with $m$ as the witness density decays, and a well-initialized
coordinate-descent solver --- not subject to this combinatorial threshold ---
recovers more at larger sizes and in less time. Beyond the exact-rank case, the
same coupling yields a decidable certificate for the complementary \emph{gap}
regime $r_+ > r$: infeasibility of its linear relaxation proves that no size-$r$
factorization exists, with an explicit dual witness. Our contribution is therefore the
certificate and the transition it exposes, not a faster general-purpose solver;
the unifying theme is that the factorization always \emph{exists} and is
\emph{representable}, while ray-economical \emph{findability}, relative to a
search budget, is what transitions.
\end{abstract}

\begin{keyword}
nonnegative matrix factorization \sep exact factorization \sep
polyhedral cones \sep double description method \sep nonnegative rank \sep
conic phase transition \sep statistical dimension \sep certificate
\MSC[2020] 15A23 \sep 15A48 \sep 90C05 \sep 90C25 \sep 52A22
\end{keyword}

\end{frontmatter}

\section{Introduction}\label{sec:intro}
Nonnegative matrix factorization (NMF) writes $A \approx W H^{\!\top}$ with
$W, H \ge 0$. We study its \emph{exact} form --- $A = W H^{\!\top}$ with inner
dimension $r$ --- for small exact-rank matrices, without assuming
separability. Exact NMF is NP-hard in general \cite{Vavasis2009} but
polynomial for fixed small $r$ \cite{AroraGeKannanMoitra2012}, which motivates
practical certified methods in that regime. Our vehicle is the cone-ray
pipeline: truncate the SVD, form the polyhedral cone of nonnegative preimages
of each factor's column space, enumerate its extreme rays by the double
description method \cite{motzkin1953double, fukuda1996double}, and search for a
nonnegative coupling of the rays.

A single distinction organises the paper. For an exact-rank-$r$ input the
size-$r$ factorization always \emph{exists}, and (Theorem~\ref{thm:complete})
is always \emph{representable} in the cone-ray coupling --- for every $m$. What
is neither guaranteed nor monotone is \emph{findability}: whether a
\emph{ray-economical} witness, using only $r$ rays per side, can be found.
Findability is the object that undergoes a phase transition, and pinning down
what governs it is our main contribution.

\paragraph{Contributions.}
\begin{enumerate}
  \item \textbf{Certified witness.} Under uniform support the constraint
        collapses to the entrywise nonnegativity of an $r\times r$ witness
        (Proposition~\ref{prop:Mt}); its feasibility \emph{certifies} that an
        exact size-$r$ NMF exists, in one $r\times r$ inverse.
  \item \textbf{Completeness (single coupling).} Theorem~\ref{thm:complete}:
        every size-$r$ NMF is representable in the single-coupling form
        $R\,\Delta\,T^{\!\top} = \Idn_r$, $\Delta \ge 0$, for every $m$.
  \item \textbf{One-sided solver.} Theorem~\ref{thm:onesided}: a closed-form
        solver that provably dominates the two-sided witness, with an explicit
        cone-geometric criterion (triangulation straddling) for the strict
        gain.
  \item \textbf{Findability phase transition.} At a fixed search budget, a
        sharp conic transition (\S\ref{sec:phase}) whose width steps down with
        $r$; we rule out both the Goemans--Williamson constant and the
        statistical dimension, show the intrinsic-volume-profile variance
        tracks it, prove that a simplicial recovery cone always yields a witness
        (Proposition~\ref{prop:r2}, so the phenomenon is absent for $r \le 2$
        and confined to $k_1 > r$ otherwise), and conjecture a combinatorial
        density-decay law whose threshold is budget-relative (shifting
        logarithmically with the candidate pool).
  \item \textbf{Obtuseness anti-blowup + a certifying two-sided union.} Ranking
        surfaces a witness at median list position $\sim\!4$ of astronomically
        many subsets (\S\ref{sec:search}); the two-sided union of one-sided
        relaxations returns exact, machine-precision factorizations with a
        decidable certificate (\S\ref{sec:exp}). Being a budget-limited witness
        search, its recovery is itself governed by the transition of
        \S\ref{sec:phase}: it is competitive in the findable regime (small $m$,
        light tails) but, as the density decays, is overtaken by well-initialized
        coordinate descent, which faces no such combinatorial threshold. The
        union's role is to certify, not to compete as a general-purpose solver.
  \item \textbf{The gap regime: a certified boundary.} For nonnegative rank
        $r_+ > r$ no valid gauge exists, and neither the closed-form witness nor
        a Stiefel gradient search finds one (\S\ref{sec:gap}); yet the same
        completeness theorem \emph{certifies} it --- infeasibility of the
        coupling's linear relaxation proves $r_+ > r$ with an explicit dual
        witness (Corollary~\ref{cor:gapcert}), decidably, and fires on the
        classical rank-$3$ gaps. We also flag a random construction that
        spuriously appears to be a gap.
\end{enumerate}

\paragraph{Positioning.}
The idea of factoring through the intersection of the SVD subspace with the
nonnegative orthant, reading off factors from the resulting cone's extreme
rays, is due to the ``nonnegative canonical / subspace edge'' line of
Pimentel-Alarc\'on and coauthors
\cite{pimentel2024canonical, nguyen2025conecollapse}, whose edges coincide
with our cone rays; we concede that scaffolding. Our contribution is what it
supports for the \emph{exact} problem: the certified witness, the
completeness and one-sided theorems, and the findability transition. Against
heuristics (\S\ref{sec:exp}) the witness is a certifier, not a competitor.

\section{Preliminaries}\label{sec:prelim}
For $A \in \R^{m \times n}_{\ge0}$ of rank $r$ (the exact-rank-$r$ setting, where
a size-$r$ NMF exists iff the nonnegative rank is also $r$), the thin SVD
$A = U S V^{\!\top}$ gives the balanced half-factors
\[
  \Ao = U_{:,1:r}\sqrt{S_{1:r,1:r}}, \qquad
  \Aoo = V_{:,1:r}\sqrt{S_{1:r,1:r}},
\]
so $A = \Ao\Aoo^{\!\top}$ and any $W = \Ao Q$, $H = \Aoo P$ with
$Q P^{\!\top} = \Idn_r$ reconstructs $A$ exactly. The set
$\{x \in \R^r : \Ao x \ge 0\}$ is a polyhedral cone; its extreme rays form the
$r \times k_1$ matrix $R$, obtained from the $m$ facet normals by the double
description method (DDM \cite{motzkin1953double, fukuda1996double}, realised
via Fukuda's \texttt{cddlib}), and analogously $T \in \R^{r \times k_2}$ for
$\Aoo$. A column of $Q$ (resp.\ $P$) yields nonnegative $W$ (resp.\ $H$) iff it
lies in $\cone(R)$ (resp.\ $\cone(T)$). Feasibility subproblems use the
standard slack-LP relaxation of a generic feasibility system
$\{Gz = h,\ z \ge 0\}$ --- introduce $t_\pm \ge 0$, minimise
$\mathbf 1^{\!\top}(t_+ + t_-)$ subject to $Gz + t_+ - t_- = h$,
$z, t_\pm \ge 0$ --- whose optimum is zero iff the system is feasible.

The ray counts $k_1, k_2$ are the pipeline's cost driver. By McMullen's upper
bound the number of extreme rays of a cone cut by $m$ facets in $\R^r$ can grow
like $\binom{m}{\lfloor r/2\rfloor}$, and empirically both $k_1$ and $k_2$ range
from $\sim\!16$ at $r = 4$ to $\sim\!50$ at $r = 6$ for $m = 10$, so the number
of witness $r$-subset pairs $\binom{k_1}{r}\binom{k_2}{r}$ runs from
$\sim\!3\times10^{6}$ at $r = 4$ to $\sim\!2\times10^{14}$ at $r = 6$. Exhaustive enumeration is hopeless; the obtuseness ranking of
\S\ref{sec:search} is what makes the search tractable, and the DDM step itself
is what eventually caps the reachable problem size (\S\ref{sec:exp}).

This cost is intrinsic to the problem, not to the choice of SVD. Any rank
factorization $A = BC^{\!\top}$ yields half-factors that differ from
$\Ao,\Aoo$ by an invertible $r\times r$ change of basis $G$, which sends the
cone $\{x:\Ao x\ge0\}$ to $G^{-1}\{x:\Ao x\ge0\}$ and so preserves its face
lattice exactly. The ray counts $k_1,k_2$, the witness, its density, and the
transition of \S\ref{sec:phase} are therefore invariant to the factorization
used (SVD, LU, QR); the enumeration cost is a function of $(m,r)$ and the cone
geometry alone. We take the truncated SVD for its conditioning, but a cheaper
triangular factor cannot reduce the DDM cost --- the same invariance that makes
the transition a clean, basis-free object forbids a factorization-based
speedup.

\section{The Cone-Ray Witness and Its Certificate}\label{sec:witness}
Recall from \S\ref{sec:prelim} the balanced half-factors $\Ao,\Aoo$ (with
$A = \Ao\Aoo^{\!\top}$) and the ray matrices $R \in \R^{r\times k_1}$,
$T \in \R^{r\times k_2}$ holding the extreme rays of $\{x:\Ao x\ge0\}$ and
$\{y:\Aoo y\ge0\}$. A size-$r$ factorization $W=\Ao Q$,
$H=\Aoo P$ reconstructs $A$ iff $Q P^{\!\top}=\Idn_r$, and is nonnegative iff
the columns of $Q$ (resp.\ $P$) are nonnegative combinations of the rays $R$
(resp.\ $T$).

\paragraph{The uniform-support witness.}
Restrict each factor to a single $r$-subset of rays: $S\subset[k_1]$ on the
$W$-side and $K\subset[k_2]$ on the $H$-side, with $R_S, T_K$ the
corresponding $r\times r$ ray matrices.

\begin{proposition}[Witness collapse and certificate]\label{prop:Mt}
For invertible $R_S, T_K$, fixing the $H$-side coefficient to $\Idn_r$
forces the $W$-side coefficient to the \emph{witness matrix}
$M = R_S^{-1}(T_K^{\!\top})^{-1}$. The pair $(S,K)$ yields the
\emph{uniform-support} exact size-$r$ NMF --- the one whose $W$-columns lie in
$\cone(R_S)$ --- iff $M\ge0$ entrywise; hence $M\ge0$ is a \emph{sufficient
certificate} that an exact size-$r$ NMF of $A$ exists, decided by one
$r\times r$ inverse and a sign check.
\end{proposition}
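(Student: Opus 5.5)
We parametrize the restricted factorization and reduce every condition to linear algebra on $r\times r$ matrices. With the $W$-side restricted to the rays $R_S$ and the $H$-side to $T_K$, we write $Q = R_S C$ and $P = T_K D$ for coefficient matrices $C, D \in \R^{r\times r}$. By the preliminaries, $W=\Ao Q \ge 0$ holds whenever $C\ge 0$, since every column of $Q$ then lies in $\cone(R_S)\subseteq\cone(R)$. Likewise $H=\Aoo P\ge0$ holds whenever $D\ge0$. Exact reconstruction is equivalent to $QP^{\!\top}=\Idn_r$, that is,
\[
  R_S\, C\, D^{\!\top} T_K^{\!\top} = \Idn_r .
\]

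The first step is the collapse. We fix the $H$-side coefficient $D=\Idn_r$, which is trivially nonnegative. Because $R_S$ and $T_K$ are invertible, the displayed equation then has the unique solution
\[
  C = R_S^{-1}(T_K^{\!\top})^{-1} = M .
\]
This gives the ``forces'' clause.

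The second step is the equivalence. For the direction $M\ge0 \Rightarrow$ NMF, we set $Q=R_SM$ and $P=T_K$. Then $QP^{\!\top}=R_SR_S^{-1}(T_K^{\!\top})^{-1}T_K^{\!\top}=\Idn_r$, the columns of $Q$ lie in $\cone(R_S)$, and the columns of $P$ are rays of $\cone(T)$. Hence $W=\Ao R_S M\ge0$, $H=\Aoo T_K\ge0$, and $WH^{\!\top}=\Ao\Aoo^{\!\top}=A$.

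For the converse we use the uniform-support factorization as the statement defines it, namely the one with $P=T_K$ and $W$-columns in $\cone(R_S)$. Because $R_S$ is invertible, the columns of $R_S$ form a basis of $\R^r$, so the coefficients of each column of $Q$ in that basis are unique. A column of $Q$ therefore lies in $\cone(R_S)$ exactly when its unique coefficient vector is nonnegative. Since $Q$ is forced to equal $R_SM$, the condition ``$W$-columns in $\cone(R_S)$'' holds iff $M\ge0$ entrywise.

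The certificate claim then follows immediately. A nonnegative $M$ exhibits an explicit exact size-$r$ NMF of $A$, and computing $M$ costs one $r\times r$ inverse (of $T_K^{\!\top}R_S$) followed by $r^2$ sign checks.

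The main obstacle is making the converse direction precise, so that ``iff'' is not misread as a claim about all NMFs. I will state explicitly that the equivalence concerns the normalized representation $P=T_K$. I will also remark that any rescaling $D=\Lambda$ by a positive diagonal matrix changes $M$ to $M\Lambda^{-1}$, which preserves the sign pattern. This shows the normalization $D=\Idn_r$ loses nothing within the ray-scaled family. The uniqueness of the coefficients, guaranteed by the invertibility of $R_S$, is the key fact that turns cone membership into entrywise nonnegativity.
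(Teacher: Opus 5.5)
Your proposal is correct and takes essentially the same route as the paper. The paper fixes the $H$-side coefficient to $\Idn_r$, uses full column rank of $\Ao,\Aoo$ and invertibility of $R_S,T_K$ to force $M$, and reads off $W,H\ge0$ from the extreme-ray property. Your converse, via uniqueness of coordinates in the basis $R_S$, is a slightly crisper version of the paper's remark that the ``iff'' holds only in the uniform-support sense.
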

\begin{proof}
Fix the $H$-side coefficient to $\Idn_r$, so $H = \Aoo T_K$, and seek a
$W$-side coefficient $M\in\R^{r\times r}$ giving $W = \Ao R_S\,M$. Then
$W H^{\!\top} = \Ao\,(R_S\,M\,T_K^{\!\top})\,\Aoo^{\!\top}$, which equals
$A = \Ao\Aoo^{\!\top}$ iff $R_S\,M\,T_K^{\!\top} = \Idn_r$ (as $\Ao,\Aoo$ have
full column rank); invertibility of $R_S,T_K$ then forces
$M = R_S^{-1}(T_K^{\!\top})^{-1}$. The factor $H = \Aoo T_K \ge 0$ automatically,
since the columns of $T_K$ are extreme rays of $\{y:\Aoo y\ge0\}$; and because
the columns of $R_S$ are extreme rays of $\{x:\Ao x\ge0\}$ --- so each satisfies
$\Ao (R_S)_{:,i} \ge 0$, whence $\Ao R_S \ge 0$ --- the factor
$W = (\Ao R_S)\,M \ge 0$ once $M\ge0$.
Equivalently, the uniform-support coupling $\Delta$ (supported on the $(S,K)$
block, where it equals $M$) is nonnegative iff $M \ge 0$. Hence $(S,K)$ yields
the exact size-$r$ NMF $(W,H) = (\Ao R_S M,\ \Aoo T_K)$ iff $M \ge 0$, decided by
one $r\times r$ inverse and a sign check.
The converse holds only in this uniform-support sense: $W = (\Ao R_S)M \ge 0$
requires each $R_S M_{:,\ell} \in \cone(R)$, and a point of $\cone(R)$ may have
negative coordinates in the basis $R_S$ when it is reached through extreme rays
outside $S$. Such a pair still factorizes $A$, but not with $W$-columns in
$\cone(R_S)$; $M \ge 0$ is therefore sufficient, and necessary only for the
uniform-support solution. Sufficiency is what the certificate uses.
\end{proof}

Feasibility of $M$ is a stronger object than a small residual: it
\emph{proves} an exact size-$r$ NMF exists (\S\ref{sec:exp} contrasts this
with the numerical residual of a heuristic).

\paragraph{The single-coupling formulation.}
Lifting the two-sided pair to one nonnegative matrix, seek
$\Delta\in\Rp^{k_1\times k_2}$ with
\begin{equation}\label{eq:constraint}
  A \;=\; \Ao\,(R\,\Delta\,T^{\!\top})\,\Aoo^{\!\top},
  \qquad\text{i.e.}\qquad R\,\Delta\,T^{\!\top}=\Idn_r,\ \ \Delta\ge0.
\end{equation}
Each nonzero $\Delta_{ij}$ contributes a nonnegative rank-one term
$\Delta_{ij}(\Ao r_i)(\Aoo t_j)^{\!\top}$, so any rank-$s$ nonnegative
factorization of $\Delta$ yields an NMF of $A$ of size $s$; in particular
$\rank_+(A) \le \rank_+(\Delta)$.

\begin{theorem}[Completeness]\label{thm:complete}
If $A = W H^{\!\top}$ with $W, H \ge 0$ of inner dimension $r$, then
there exist $\Phi \in \Rp^{k_1 \times r}$, $\Psi \in \Rp^{k_2 \times r}$
such that $\Delta = \Phi\,\Psi^{\!\top} \ge 0$ satisfies
$R\,\Delta\,T^{\!\top} = \Idn_r$ and reconstructs $A$. Thus every size-$r$
NMF is representable in~\eqref{eq:constraint}, for every $m$. (Here $\Phi,\Psi$
are the ray-coordinate factors of the coupling, distinct from the $r\times r$
witness $M$ of Proposition~\ref{prop:Mt}.)
\end{theorem}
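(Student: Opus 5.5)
Given an NMF $A = WH^{\!\top}$ of inner dimension $r$, the plan is to pull $W$ and $H$ back to coefficient matrices in the SVD coordinates. Then we expand each coefficient column over the extreme rays, and read off $\Phi,\Psi$ as the nonnegative ray coordinates.

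\textbf{Step 1: pull back to coefficients.} Since $A = \Ao\Aoo^{\!\top}$ with $\Ao,\Aoo$ of full column rank $r$, and $A$ has rank $r$, we have $\col(W) \supseteq \col(A) = \col(\Ao)$. Because $W$ has exactly $r$ columns, $\col(W)=\col(\Ao)$, and likewise $\col(H)=\col(\Aoo)$. Hence $W = \Ao Q$ and $H = \Aoo P$ for unique $Q,P\in\R^{r\times r}$, both invertible.

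\textbf{Step 2: the identity $QP^{\!\top}=\Idn_r$.} Substituting gives $\Ao(QP^{\!\top})\Aoo^{\!\top} = \Ao\Aoo^{\!\top}$. Cancelling the full-column-rank factors (left-multiply by $\Ao^{+}$, right-multiply by $(\Aoo^{\!\top})^{+}$) yields $QP^{\!\top}=\Idn_r$.

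\textbf{Step 3: expand over the rays.} Each column $q_\ell$ of $Q$ satisfies $\Ao q_\ell = W_{:,\ell}\ge 0$, so $q_\ell$ lies in the polyhedral cone $\{x:\Ao x\ge 0\}$. I expect this to be the only point needing care. The cone is pointed, since $\Ao x\ge0$ and $\Ao(-x)\ge0$ force $\Ao x=0$, hence $x=0$ by full column rank. By Minkowski--Weyl for pointed polyhedral cones, the cone equals $\cone(R)$, the conic hull of its extreme rays. So there is $\phi_\ell\in\Rp^{k_1}$ with $q_\ell = R\phi_\ell$. Stacking these gives $Q = R\Phi$ with $\Phi\ge0$. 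Symmetrically, $P = T\Psi$ with $\Psi\in\Rp^{k_2\times r}$.

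\textbf{Step 4: conclude.} Set $\Delta=\Phi\Psi^{\!\top}$, which is entrywise nonnegative as a product of nonnegative matrices. Then $R\Delta T^{\!\top} = (R\Phi)(T\Psi)^{\!\top} = QP^{\!\top} = \Idn_r$. Moreover $\Ao R\Delta T^{\!\top}\Aoo^{\!\top} = WH^{\!\top} = A$, so this $\Delta$ represents the given factorization in~\eqref{eq:constraint}.

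Nothing in the argument depends on $m$ beyond the finiteness of the ray sets, so the representation holds for every $m$. The only genuine ingredients are the column-space equality in Step 1, which uses exact rank $r$, and pointedness in Step 3, which ensures the extreme rays generate the cone.
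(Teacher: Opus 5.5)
Your proof is correct and is essentially the paper's argument. The one difference is ordering: the paper sets $Q=\Ao^{+}W$, $P=\Aoo^{+}H$ directly and then uses $\col W=\col\Ao$ to get $\Ao Q=W$, whereas you first obtain $W=\Ao Q$ from the column-space equality and then cancel to get $QP^{\!\top}=\Idn_r$. Both proofs finish the same way, using pointedness to expand $Q$ and $P$ nonnegatively over the extreme rays.
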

\begin{proof}
Put $Q = \Ao^{+}W$, $P = \Aoo^{+}H$. Since $A = \Ao\Aoo^{\!\top}
= W H^{\!\top}$ and $\Ao,\Aoo$ have full column rank, $Q P^{\!\top} = \Idn_r$.
Because $\col W = \col A = \col\Ao$ (each of rank $r$), $\Ao Q = \Ao\Ao^{+}W = W$;
thus $\Ao Q_{:,\ell} = W_{:,\ell} \ge 0$, so each column of $Q$ lies in
$\{x : \Ao x \ge 0\}$, which equals $\cone(R)$ because $\Ao$ has full column
rank, making the cone pointed and hence the conic hull of its extreme rays;
so giving $Q = R\,\Phi$ with $\Phi \ge 0$; likewise
$P = T\,\Psi$, $\Psi \ge 0$. Then $R (\Phi\,\Psi^{\!\top}) T^{\!\top}
= (R\,\Phi)(T\,\Psi)^{\!\top} = Q P^{\!\top} = \Idn_r$, and
$\Delta = \Phi\,\Psi^{\!\top} \ge 0$.
\end{proof}

\begin{remark}
The proof is $m$-independent: the size-$r$ factorization is always
representable, even far past the phase transition of
Section~\ref{sec:phase}. What transitions is not existence but
ray-economical \emph{findability}. Verified numerically to residual
$\sim 10^{-15}$.
\end{remark}

\begin{corollary}[Decidable gap certificate]\label{cor:gapcert}
Relax the coupling in~\eqref{eq:constraint} from $\Delta=\Phi\Psi^{\!\top}$ to an
unconstrained $\Delta\ge0$, giving the linear feasibility program: find
$\Delta\ge0$ with $R\,\Delta\,T^{\!\top}=\Idn_r$. If this program is
\emph{infeasible}, then $A$ has no size-$r$ nonnegative factorization; since
$\rank A=r$, equivalently $\rank_+(A)>r$.
\end{corollary}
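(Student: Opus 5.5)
The plan is to argue by contraposition directly from Theorem~\ref{thm:complete}. Suppose $A$ admits a size-$r$ nonnegative factorization $A = WH^{\!\top}$ with $W,H\ge0$. Completeness then supplies $\Phi\in\Rp^{k_1\times r}$ and $\Psi\in\Rp^{k_2\times r}$ with $\Delta = \Phi\Psi^{\!\top}$ satisfying $R\,\Delta\,T^{\!\top}=\Idn_r$. Since $\Phi,\Psi\ge0$ entrywise, their product $\Delta$ is entrywise nonnegative. So $\Delta$ is a feasible point of the relaxed linear program $\{\Delta\ge0:\ R\Delta T^{\!\top}=\Idn_r\}$.

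The relaxation only drops the rank/factorization structure, so its feasible set contains the image of the completeness construction. Hence infeasibility of the relaxation rules out every size-$r$ NMF. This is the contrapositive of the claim.

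For the equivalence with $\rank_+(A)>r$, I use the standard sandwich $\rank A\le\rank_+(A)$. With $\rank A=r$, the absence of a factorization of inner dimension exactly $r$ leaves only $\rank_+(A)>r$. One small point needs a line of justification: a nonnegative factorization of inner dimension $s<r$ is impossible, since it would give $\rank A\le s<r$. So "no size-$r$ NMF" is indeed the same as "$\rank_+(A)\neq r$", and hence the same as $\rank_+(A)>r$. Conversely, a factorization of size less than $r$ could always be padded with zero columns to size $r$, so nothing is lost either way.

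For decidability, I would note that the program is a finite linear feasibility problem in the $k_1k_2$ entries of $\Delta$. Writing it as $G\,\mathrm{vec}(\Delta)=\mathrm{vec}(\Idn_r)$ with $G = T\otimes R$, infeasibility is decided exactly by the slack LP of \S\ref{sec:prelim}. By Farkas' lemma, infeasibility comes with a dual witness $Y\in\R^{r\times r}$ satisfying $R^{\!\top} Y T\ge0$ entrywise and $\langle Y,\Idn_r\rangle<0$, i.e.\ $\operatorname{tr}Y<0$. This certificate can be checked independently.

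The main (mild) obstacle is only bookkeeping. The rays $R,T$ must be the full sets of extreme rays of the pointed cones, so that completeness applies. The pointedness of these cones, which comes from the full column rank of $\Ao,\Aoo$, is already established in the proof of Theorem~\ref{thm:complete}, so the argument reduces to that theorem.
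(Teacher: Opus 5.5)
Your proposal is correct and follows the paper's proof. Like the paper, you take the contrapositive of Theorem~\ref{thm:complete}, where any size-$r$ NMF yields the feasible point $\Delta=\Phi\Psi^{\!\top}\ge0$, and you use $\rank A=r\le\rank_+(A)$ to turn ``no size-$r$ NMF'' into $\rank_+(A)>r$. Your Farkas witness ($R^{\!\top}YT\ge0$, $\operatorname{tr}Y<0$) is the dual certificate of Proposition~\ref{prop:gapdual} with $Y$ replaced by $-Y$, which the paper states separately rather than inside this proof.
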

\begin{proof}
Contrapositive of Theorem~\ref{thm:complete}: any size-$r$ NMF supplies a
feasible $\Delta=\Phi\Psi^{\!\top}\ge0$, in particular a feasible point of the
program. As $\rank A=r$, a size-$r$ NMF exists iff $\rank_+(A)=r$.
\end{proof}
\begin{proposition}[Dual certificate]\label{prop:gapdual}
The program of Corollary~\ref{cor:gapcert} is infeasible iff there exists
$Y\in\R^{r\times r}$ with $R^{\!\top}YT\le0$ entrywise and
$\operatorname{tr}(Y)>0$. Such a $Y$ separates $\Idn_r$ from
$\cone\{r_it_j^{\!\top}\}$: $\langle Y,R\Delta T^{\!\top}\rangle
=\langle R^{\!\top}YT,\Delta\rangle\le0$ for every $\Delta\ge0$, whereas
$\langle Y,\Idn_r\rangle=\operatorname{tr}(Y)>0$; it is verified by one matrix
product and a sign check.
\end{proposition}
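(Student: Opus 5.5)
This is a Farkas-type alternative for the linear system $\{\Delta\ge0,\ R\Delta T^{\!\top}=\Idn_r\}$. The plan is to rewrite the system as a statement about a finitely generated cone and then apply the separating hyperplane theorem for polyhedral cones.

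\emph{Reformulation.} Put the trace inner product $\langle X,Y\rangle=\operatorname{tr}(X^{\!\top}Y)$ on $\R^{r\times r}$. Write
\[
R\Delta T^{\!\top}=\sum_{i,j}\Delta_{ij}\,r_it_j^{\!\top}.
\]
Hence the program is feasible iff $\Idn_r\in\mathcal C:=\cone\{r_it_j^{\!\top}: i\le k_1,\ j\le k_2\}$. The adjoint identity $\langle Y,R\Delta T^{\!\top}\rangle=\langle R^{\!\top}YT,\Delta\rangle$ follows from cyclicity of the trace. It shows that $\langle Y, r_it_j^{\!\top}\rangle=(R^{\!\top}YT)_{ij}$. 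So the condition $R^{\!\top}YT\le0$ says exactly that $Y$ lies in the polar cone $\mathcal C^\circ$.

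\emph{Easy direction.} Suppose such a $Y$ exists. For every $\Delta\ge0$ we get $\langle Y,R\Delta T^{\!\top}\rangle=\langle R^{\!\top}YT,\Delta\rangle\le0$, since this is a sum of products of a nonpositive and a nonnegative entry. But $\langle Y,\Idn_r\rangle=\operatorname{tr}(Y)>0$. Therefore $R\Delta T^{\!\top}\neq\Idn_r$ for every $\Delta\ge0$, and the program is infeasible. The verification claim is immediate: checking a candidate $Y$ needs only the product $R^{\!\top}YT$, a sign check, and a trace.

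\emph{Hard direction.} Suppose the program is infeasible, so $\Idn_r\notin\mathcal C$. This is the only step needing care. Because $\mathcal C$ is finitely generated, it is closed. I would cite Farkas' lemma directly, or the standard fact that finitely generated cones are polyhedral and hence closed (Weyl–Minkowski), so no limiting argument is needed. Strict separation of the point $\Idn_r$ from the closed convex cone $\mathcal C$ then gives $Y$ and a scalar $\alpha$ with $\langle Y,X\rangle\le\alpha<\langle Y,\Idn_r\rangle$ for all $X\in\mathcal C$. Since $\mathcal C$ is a cone containing $0$, we may take $\alpha=0$: scaling any $X$ with $\langle Y,X\rangle>0$ would violate the bound. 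This yields $\langle Y,r_it_j^{\!\top}\rangle\le0$ for all $i,j$, i.e.\ $R^{\!\top}YT\le0$, together with $\operatorname{tr}(Y)>0$.

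Equivalently, one can vectorize the system as $(T\otimes R)\operatorname{vec}\Delta=\operatorname{vec}\Idn_r$ and invoke the standard Farkas alternative for $\{Gz=h,\ z\ge0\}$ with $G=T\otimes R$ and $h=\operatorname{vec}\Idn_r$. The dual vector $y=\operatorname{vec}Y$ satisfies $G^{\!\top}y\le0$ and $h^{\!\top}y>0$, which translates back to the stated conditions. The separation sentence in the statement is just the easy-direction computation.
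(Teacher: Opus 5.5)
Your proposal is correct and takes the paper's route: the paper gives no separate proof, stating only the separation computation $\langle Y,R\Delta T^{\!\top}\rangle=\langle R^{\!\top}YT,\Delta\rangle\le0<\operatorname{tr}(Y)$ (your easy direction) and leaving the converse implicitly to Farkas' lemma. Your hard direction supplies that converse correctly, either via closedness of the finitely generated cone $\cone\{r_it_j^{\!\top}\}$ and separation with $\alpha$ reduced to $0$, or via the vectorized alternative with $G=T\otimes R$ and $h=\operatorname{vec}\Idn_r$.
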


\section{Obtuseness-Ranked Search}\label{sec:search}
The candidate space $\binom{k_1}{r}\binom{k_2}{r}$ outgrows any enumeration
budget, so we rank. The witness $M \ge 0$ is more likely for fat
(near-orthogonal) sub-cones, captured by the \emph{obtuseness} score
\begin{equation}\label{eq:obtuseness}
  \mathrm{obtuseness}(S) \;:=\;
  \frac{|\det R_S|}{\prod_{i \in S}\|R_{:,i}\|} \;\in\; [0,1],
\end{equation}
which is $1$ iff the columns $\{R_{:,i}\}_{i \in S}$ are mutually
orthogonal (Hadamard's inequality) and $0$ iff $S$ is rank-deficient. Both
solvers of this paper --- the two-sided witness search below and the one-sided
relaxation of \S\ref{sec:solver} --- walk the top of this ranked list, capping
at $\mathrm{maxTries}$ subsets per side.

\paragraph{Obtuseness tames the blowup.}
The ranking makes the search practical despite the combinatorial candidate
space, measurably so. On the random rank-$r$ generator across a range of $m$
($540$ instances, $r = 4, 5, 6$), the first feasible witness appears at median
list position $\approx 4$ on the $W$-side and $\approx 6$ on the $H$-side,
drawn from a combinatorial space $\binom{k_1}{r}$ reaching $\sim\!2\times10^{7}$
per side (a full witness-pair space of $\sim\!4\times10^{14}$), from which the
ranked pool is sampled. Against a \emph{random} ordering of
the same candidates, the obtuseness order reaches a feasible subset about
$5\times$ sooner (median position $4$ vs.\ $19$), a margin that widens with cone
size --- the ranking, not luck, surfaces the witness.

\paragraph{Augmented alt-LP fallback.}
When no $r$-subset is witness-feasible, we lift the uniform-support
restriction minimally: augment both supports by $a = 2$ rays of maximal
angular separation and solve a short slack-LP alternation for
$\mu, \nu \ge 0$ on the enlarged $r \times (r+2)$ bases. On the $m = n = 10$
Monte Carlo this lifts recovery from the closed-form ceiling to the hybrid
counts of Table~\ref{tab:hybrid}.

\begin{table}[h]\centering
\caption{Success counts (out of $100$) at $m = n = 10$. ``Mt'' columns walk
the top-$N$ obtuseness ranking with the closed-form witness; the
\textbf{hybrid} adds the augmented alt-LP fallback at each visited pair,
breaking the closed-form ceiling that top-$400$ alone
cannot.}\label{tab:hybrid}
\begin{tabular}{lccccc}
\toprule
$r$ & top-$5$ Mt & top-$200$ Mt & top-$400$ Mt & aug2+altLP (top-$1$) & \textbf{hybrid} \\
\midrule
$4$ & $44/100$ & $79/100$ & $79/100$ & $54/100$ & $\mathbf{99/100}$  \\
$5$ & $32/100$ & $85/100$ & $87/100$ & $26/100$ & $\mathbf{95/100}$  \\
$6$ & $\,8/100$ & $58/100$ & $59/100$ & $14/100$ & $\mathbf{75/100}$ \\
\bottomrule
\end{tabular}
\end{table}

\paragraph{Alt-LP mechanics.}
On the augmented bases the witness coefficients become
$\mu, \nu \in \R_{\ge 0}^{(r+a) \times r}$, and the constraint takes the
bilinear form
\begin{equation}\label{eq:joint-aug}
  R_{S'}\,\mu\,\nu^{\!\top} T_{K'}^{\!\top} = \Idn_r,
\end{equation}
$r^2$ scalar equations in $2(r+a)r$ unknowns, with $S', K'$ the $(r+a)$-ray
augmented supports. We solve~\eqref{eq:joint-aug} by alternation: fix $\nu$ and
solve the linear system for $\mu$ by slack-LP (\S\ref{sec:prelim}); fix $\mu$
and solve for $\nu$ likewise. Each subsolve is an LP of size $r^2$ equations in
$(r+a)r$ variables plus $2r^2$ slacks (e.g.\ $16$ equations in
$24 + 32 = 56$ variables at $r = 4, a = 2$). We initialise
$\mu^{(0)} = [\max(M, 0);\,\mathbf 0]$, $\nu^{(0)} = [\Idn_r;\,\mathbf 0]$: since
the fallback is entered precisely when $M \not\ge 0$, the clip $\max(M,0)$ is the
nearest nonnegative point to the (infeasible) uniform-support solution, so the
alternation starts adjacent to it and deviates only as needed. A per-trial cap of $1{,}000$ alt-LP attempts bounds
the worst-case wall time (Algorithm~\ref{alg:hybrid}).

\begin{algorithm}[h]
\caption{Hybrid obtuseness walk with augmented alt-LP fallback.}
\label{alg:hybrid}
\begin{algorithmic}[1]
\State Compute $\Ao, \Aoo$, the ray matrices $R, T$, and their obtuseness
       rankings; set $\mathrm{altAttempts} \leftarrow 0$.
\For{$i = 1, \dots, \mathrm{maxTries}$; \; $j = 1, \dots, \mathrm{maxTries}$}
  \State $S, K \leftarrow$ position-$i$, position-$j$ subsets; form $M = R_S^{-1}(T_K^{\!\top})^{-1}$.
  \If{$M \ge -10^{-8}$ componentwise} \Return $(S, K, M, \Idn_r)$ \Comment{witness path}
  \EndIf
  \If{$\mathrm{altAttempts} < 1{,}000$}
     \State augment $S, K$ by two farthest rays; run $\le 5$ alt-LP rounds
            on~\eqref{eq:joint-aug} from $(\mu^{(0)}, \nu^{(0)})$;
     \State \textbf{if} residual $< 10^{-8}$ \textbf{return} the augmented factors;
            $\mathrm{altAttempts}\mathrel{+}=1$.
  \EndIf
\EndFor
\State \Return \texttt{no\_feasible}.
\end{algorithmic}
\end{algorithm}

\paragraph{Two budgets: walk depth versus pool size.}
Two distinct budgets govern the search. Beyond the top-$200$ the witness
search saturates in \emph{walk depth} (Table~\ref{tab:hybrid}): doubling to top-$400$
adds at most two successes --- but because the sampled pool contains no
feasible pair that the obtuseness order places beyond position $\sim\!200$, not
because feasible pairs are absent from the full candidate space. Enlarging the \emph{pool} is the effective lever: with a
$12\times$ larger sample the witness search recovers far more instances
(\S\ref{sec:phase}, Table~\ref{tab:budget}). Within a fixed pool the residual
failures are instances whose sampled rays lack a near-orthogonal $r$-subset on
one side --- at $m = 10$ the median best obtuseness over the no-feasible trials
falls from $0.74$ at $r = 4$ to $0.33$ at $r = 6$, and in the larger $m = 15$
regime it reaches $0.15$ at $r = 8$ (Table~\ref{tab:m15}) --- but this scarcity
is relative to the pool and recedes as the pool grows. This is the empirical face of the budget-relative
transition of \S\ref{sec:phase}.

\section{A One-Sided-Relaxed Solver}\label{sec:solver}
The two-sided witness fixes $r$ rays on \emph{both} sides. We relax the
$H$-side: fix an $r$-subset $S$ of $R$'s columns and let the $H$-side use
all rays. This has a closed form and provably dominates the
two-sided witness subset-by-subset.

\begin{theorem}[One-sided solver: closed form and dominance]
\label{thm:onesided}
Let $S$ be an $r$-subset of the columns of $R$ with $R_S$ invertible, and
put
\[
  W_S = \Ao R_S, \qquad H_S = \Aoo\,(R_S^{-1})^{\!\top}.
\]
Then $W_S H_S^{\!\top} = A$ and $W_S \ge 0$ for every such $S$, and:
\begin{enumerate}
  \item[(i)] \emph{(closed form)} a nonnegative $D_S$ with
        $R_S D_S T^{\!\top} = \Idn_r$ exists iff $H_S \ge 0$; the $H$-side
        rays $T$ need never be computed;
  \item[(ii)] \emph{(geometric)} equivalently, every row of $R_S^{-1}$
        lies in the cone $C_2 = \{y : \Aoo y \ge 0\}$;
  \item[(iii)] \emph{(dominance)} if the two-sided witness succeeds at
        $(S, K)$ then $H_S \ge 0$, so the one-sided \emph{criterion} holds
        wherever the witness criterion does at the same $S$ --- solver-level
        dominance then follows whenever the one-sided walk also reaches $S$;
  \item[(iv)] \emph{(strict gain)} the one-sided solver succeeds at $S$
        while the witness fails there iff the $r$ rows of $R_S^{-1}$ all
        lie in $C_2$ but no simplicial $r$-ray sub-cone $\cone(T_K)$
        contains all of them.
\end{enumerate}
\end{theorem}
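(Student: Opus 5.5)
The proof is mostly bookkeeping built on two facts already used in Proposition~\ref{prop:Mt} and Theorem~\ref{thm:complete}. First, the cone $C_2=\{y:\Aoo y\ge0\}$ is pointed and equals $\cone(T)$. Second, $\Ao,\Aoo$ have full column rank, so a product $\Ao X\Aoo^{\!\top}$ determines $X$.

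\textbf{Reconstruction and $W_S\ge0$.} We have $W_SH_S^{\!\top}=\Ao R_SR_S^{-1}\Aoo^{\!\top}=A$. Each column of $R_S$ lies in $\{x:\Ao x\ge0\}$, so $W_S=\Ao R_S\ge0$.

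\textbf{Part (i).} Write $P=T D_S^{\!\top}$, so the constraint becomes $R_SP^{\!\top}=\Idn_r$.
\begin{itemize}
\item[] \emph{Forward direction.} Given $D_S\ge0$ with $R_SD_ST^{\!\top}=\Idn_r$, invertibility of $R_S$ forces $P^{\!\top}=R_S^{-1}$, i.e.\ $P=(R_S^{-1})^{\!\top}$. Hence $H_S=\Aoo P=\Aoo T D_S^{\!\top}$. This is nonnegative because $\Aoo T\ge0$ entrywise and $D_S\ge0$.
\item[] \emph{Converse.} Suppose $H_S\ge0$. Then each column of $(R_S^{-1})^{\!\top}$ (a row of $R_S^{-1}$) lies in $C_2=\cone(T)$. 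Pointedness gives nonnegative coefficients $\psi_\ell$ with $(R_S^{-1})^{\!\top}_{:,\ell}=T\psi_\ell$. Stacking these, set $D_S^{\!\top}=[\psi_1,\dots,\psi_r]$; then $R_SD_ST^{\!\top}=R_SR_S^{-1}=\Idn_r$.
\end{itemize}
The test $H_S\ge0$ involves only $\Aoo$ and $R_S$, so $T$ never has to be computed.

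\textbf{Part (ii).} $H_S\ge0$ says exactly that $\Aoo y\ge0$ for every column $y$ of $(R_S^{-1})^{\!\top}$, i.e.\ for every row of $R_S^{-1}$. So (ii) is a restatement of the definition of $C_2$.

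\textbf{Part (iii).} If the witness succeeds at $(S,K)$, then $M\ge0$ and $R_SMT_K^{\!\top}=\Idn_r$. Let $D_S$ be $M$ placed in the $K$-columns and zero elsewhere. Then $D_S\ge0$ and $R_SD_ST^{\!\top}=\Idn_r$, so (i) gives $H_S\ge0$. Directly: $(R_S^{-1})^{\!\top}=T_KM^{\!\top}$ lies in $\cone(T_K)\subseteq C_2$. The solver-level statement is then just the observation that both walks use the same obtuseness ranking on $S$.

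\textbf{Part (iv).} This is where the real content sits, and the main obstacle is to characterize exactly when the witness succeeds at $S$ for \emph{some} $K$. I would prove the following claim.

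\emph{Claim.} For a fixed $S$, the witness succeeds at some $K$ with $T_K$ invertible iff every row of $R_S^{-1}$ lies in $\cone(T_K)$.

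Since $T_K$ is invertible, $M=R_S^{-1}(T_K^{\!\top})^{-1}$ is equivalent to $M^{\!\top}=T_K^{-1}(R_S^{-1})^{\!\top}$. Each column of $M^{\!\top}$ is therefore the (unique) coordinate vector of a row of $R_S^{-1}$ in the basis $T_K$. So $M\ge0$ holds iff each row of $R_S^{-1}$ has nonnegative $T_K$-coordinates, i.e.\ lies in the simplicial cone $\cone(T_K)$.

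With the claim in hand, (iv) follows. Its event is: (ii) holds, and no simplicial sub-cone $\cone(T_K)$ contains all the rows, which by the claim means the witness fails for every $K$.

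Two points need care. First, if "the witness fails there" is read for a specific $K$, then (iv) holds for that $K$ by the same argument. Second, $r$-subsets $K$ with $T_K$ singular must be excluded, since they yield no witness; I would state "simplicial" to mean invertible $T_K$. By Carathéodory every point of $C_2$ lies in some simplicial sub-cone. The point to stress is that the rows of $R_S^{-1}$ may each need a different simplicial sub-cone, and this "triangulation straddling" is exactly the strict gain.
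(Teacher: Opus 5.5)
Your proof is correct and follows the paper's argument essentially step for step: reconstruction, (i)--(ii) via $C_2=\cone(T)$, and (iii) via $R_S^{-1}=M\,T_K^{\!\top}$. Your explicit claim that, for invertible $T_K$, $M\ge0$ iff every row of $R_S^{-1}$ lies in $\cone(T_K)$ (by uniqueness of coordinates in the basis $T_K$) usefully supplies the converse the paper leaves implicit when it calls (iv) ``the negation of the witness condition within (ii)''. Two small remarks: your caveat about singular $T_K$ is automatic, since the $r$ rows of $R_S^{-1}$ span $\R^r$ and so cannot all lie in $\cone(T_K)$ unless $T_K$ is invertible; and the solver-level part of (iii) is asserted only conditionally on the one-sided walk reaching $S$, so it needs no appeal to the two walks sharing a ranking.
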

\begin{proof}
$W_S H_S^{\!\top} = \Ao R_S R_S^{-1} \Aoo^{\!\top} = \Ao \Aoo^{\!\top} = A$,
and $\Ao R_S \ge 0$ since the columns of $R_S$ are extreme rays of
$\{x : \Ao x \ge 0\}$. \emph{(i)--(ii)}: $R_S D_S T^{\!\top} = \Idn_r$ means
$D_S T^{\!\top} = R_S^{-1}$, i.e.\ row $a$ of $R_S^{-1}$ equals
$\sum_j (D_S)_{aj}\, t_j$ with $(D_S)_{aj} \ge 0$ --- a nonnegative
combination of the rays $t_j$, hence a point of $\cone(T) = C_2$. Such a
$D_S \ge 0$ exists iff each row of $R_S^{-1}$ lies in $C_2$, i.e.\
$\Aoo (R_S^{-1})^{\!\top} = H_S \ge 0$; and $H_S$ is independent of the
particular $D_S$. \emph{(iii)}: the witness at $(S, K)$ gives
$M = R_S^{-1}(T_K^{\!\top})^{-1} \ge 0$, so $R_S^{-1} = M\,T_K^{\!\top}$ and
each row of $R_S^{-1}$ is a nonnegative combination of the $r$ rays
$\{t_j : j \in K\}$, hence lies in $\cone(T_K) \subseteq C_2$; apply (ii).
\emph{(iv)} is the negation of the witness condition within (ii).
\end{proof}

\begin{remark}
Theorem~\ref{thm:onesided} makes the mechanism explicit: the two-sided
witness forces all $r$ rows of $R_S^{-1}$ into a single facet-simplex
$\cone(T_K)$ of $C_2$, whereas the relaxation only needs them in $C_2$
itself. The gain is exactly the event that these rows \emph{straddle a
triangulation} of $C_2$. The solver is thus a closed-form walk
(Algorithm~\ref{alg:onesided}) requiring only the $W$-side cone.
\end{remark}

\begin{algorithm}[H]
\caption{One-sided-relaxed exact size-$r$ NMF}\label{alg:onesided}
\begin{algorithmic}[1]
\State Compute $\Ao, \Aoo$ and the $W$-side extreme rays $R$ (DDM).
\For{$r$-subsets $S$ in descending obtuseness order}
  \If{$R_S$ invertible and $H_S := \Aoo (R_S^{-1})^{\!\top} \ge 0$}
    \State \Return $W = \Ao R_S$, $H = H_S$.
  \EndIf
\EndFor
\State \Return \texttt{no\_feasible}.
\end{algorithmic}
\end{algorithm}

Empirically the gain is large and grows into the hard regime
(Table~\ref{tab:onesided}, Figure~\ref{fig:two}): at $r = 6, m = 13$ the
witness recovers $1/40$ while the one-sided solver recovers $30/40$, both
at machine precision.

\begin{table}[h]\centering
\caption{Exact size-$r$ recovery on $m \times m$ rank-$r$ matrices
(40 trials/cell). The one-sided solver recovers where the witness has
died.}\label{tab:onesided}
\begin{tabular}{llcc}
\toprule
$r$ & $m$ & witness & one-sided \\
\midrule
6 & 10 & 21/40 & 38/40 \\
6 & 12 & \phantom{0}4/40 & 33/40 \\
6 & 13 & \phantom{0}1/40 & 30/40 \\
\bottomrule
\end{tabular}
\end{table}

\begin{figure}[h]\centering
\includegraphics[width=0.6\linewidth]{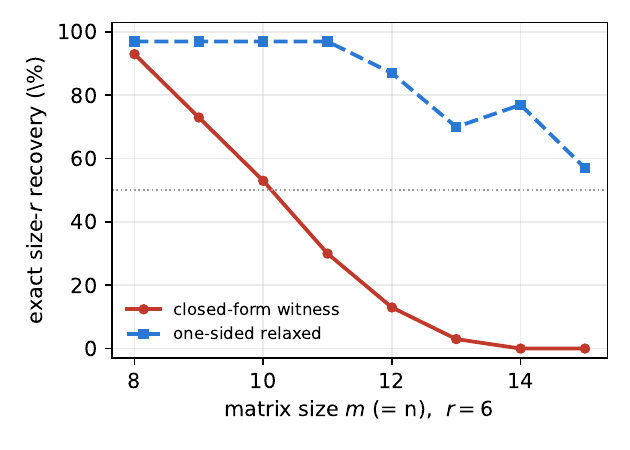}
\caption{Two-threshold contrast at $r = 6$: the one-sided relaxation's
recoverability transition sits far to the right of the witness's.}
\label{fig:two}
\end{figure}

\paragraph{The margin is triangulation straddling.}
The straddling mechanism is measured \emph{directly}, not inferred: both
feasibility tests run off the same dual matrix $R_S^{-1}$, so at each subset
$S$ we record whether its rows lie in a common facet-simplex of $C_2$
(witness) or only in $C_2$ (one-sided). Over $1440$ random
$A = XY^{\!\top}$ (Table~\ref{tab:margin}): (i)~$\Pr[\text{one-sided}] \ge
\Pr[\text{witness}]$ in every cell --- the finite-sample face of
Theorem~\ref{thm:onesided}(iv); (ii)~the per-subset \emph{straddling
fraction} rises from $\approx 0.37$ at $(r,m) = (3,9)$ to $0.73$ at $(5,12)$,
becoming the dominant mode in the harder cells --- consistent with a finer
triangulation of $C_2$, which we do not measure directly; (iii)~the gain
is largest where the witness is weakest (gain $0.57$ at
$\Pr[\text{witness}] = 0.39$, $r = 5$), broadly rising as the witness rate falls,
though not monotonically across the tabulated cells. So the
relaxation does not merely share the easy regime --- it \emph{extends} the
solvable one. A closed-form bound on the gain remains open: it is the
same-cell probability for $r$ coupled dual points, the \emph{same}
random-cone quantity as the threshold of Section~\ref{sec:phase}
(\S\ref{sec:disc}).

\begin{table}[h]\centering
\caption{One-sided margin over random $A = XY^{\!\top}$ ($80$ instances/cell over
$18$ $(r,m)$ cells, $1440$ instances in all; $5$ representative cells shown;
top-$120$ obtuseness-ranked subsets/side). $\mathrm{gain} =
\Pr[\text{one-sided}] - \Pr[\text{witness}]$; the straddling fraction is the
per-subset share of one-sided wins in no common simplex of $C_2$. The gain is
largest in the cells where the witness is weakest.}\label{tab:margin}
\begin{tabular}{ccccc}
\toprule
$r$ & $m$ & $\Pr[\text{witness}]$ & gain & straddling frac. \\
\midrule
3 & \phantom{0}9 & 0.60 & 0.22 & 0.37 \\
4 & 11 & 0.70 & 0.25 & 0.51 \\
5 & 10 & 0.82 & 0.16 & 0.50 \\
5 & 11 & 0.59 & 0.36 & 0.63 \\
5 & 12 & 0.39 & 0.57 & 0.73 \\
\bottomrule
\end{tabular}
\end{table}

\paragraph{The two-sided union.}
Theorem~\ref{thm:onesided} relaxes the $H$-side. By the symmetry of the
coupling, we may equally fix an $r$-subset $K$ of the $H$-side rays $T$ and
relax the $W$-side:
\[
  H_K = \Aoo T_K, \qquad W_K = \Ao\,(T_K^{-1})^{\!\top},
\]
for which $H_K \ge 0$ holds automatically and $W_K H_K^{\!\top} = A$, with
success iff $W_K \ge 0$ (equivalently, each row of $T_K^{-1}$ lies in
$C_1 = \{x : \Ao x \ge 0\}$). Each direction dominates the two-sided witness
(Theorem~\ref{thm:onesided}(iii)), so their \emph{union} --- run the $W$-fixed
relaxation, then the $H$-fixed one, and return whichever succeeds --- dominates
both, in one deterministic pass, reconstructing $A$ exactly from the successful
side. On the uniform generator the two directions largely coincide (the
$W$-side already succeeds), but they diverge on skewed and heavy-tailed inputs,
where the $H$-fixed direction recovers instances the $W$-fixed one misses. The
union is thus the solver we recommend and benchmark in \S\ref{sec:exp}; it is
what lifts recovery above multi-start HALS on average (Table~\ref{tab:crossdist}).

\paragraph{Enlarging the candidate pool.}
The one-sided walk ranks a pool of $r$-subsets --- all $\binom{k_1}{r}$ of
them when that is at most the cap (default $5000$), otherwise a seeded sample of
that size, so the walk is deterministic given $A$ --- and tests the top ones. Because feasible subsets persist but thin out with $m$
(\S\ref{sec:phase}), the recovery lever is the \emph{pool size}, not the walk
depth: enlarging the pool to $60000$ raises one-sided recovery at $r=6$ from
$28\%$ to $77\%$ at $m=17$ (and $78\%\!\to\!98\%$ at $m=13$), at a sublinear
time cost ($12\times$ pool for $\approx\!8\times$ time,
$0.28\mathrm{s}\!\to\!2.2\mathrm{s}$ per instance;
Table~\ref{tab:budget}). The obtuseness key ($|\det R_S|/\prod\|R_{:,i}\|$,
matched to the two-sided witness, where a near-orthogonal $R_S$ makes
$M = R_S^{-1}(\cdot)$ well conditioned) remains a good default here. A key
matched to the \emph{one-sided} geometry does marginally better at ranking:
success at $S$ is $\dual(\cone R_S)\subseteq C_2$, and a smaller dual cone is
enclosed more readily, so ranking by how clustered the dual generators
(the rows of $R_S^{-1}$) are surfaces feasible subsets to median position $0$--$1$
versus obtuseness's $3$--$11$. At a top-$200$ walk both keys reach a feasible subset in
essentially every instance where the pool contains one, so the ranking-key
advantage is slight and the pool size dominates;
we therefore keep obtuseness and expose the pool size as the tunable lever.

\section{The Findability Phase Transition}\label{sec:phase}
On $m \times m$ rank-$r$ matrices ($m \ge r$), at a fixed search budget (the
default pool of $5000$ sampled subsets per side, top-$200$ walk), the
witness-feasibility success rate falls sharply as $m$ grows, and the fall
\emph{sharpens to a step} as $r$ grows (Table~\ref{tab:scaling},
Figure~\ref{fig:scaling}) --- the signature of a conic phase transition
\cite{Amelunxen2014}. The threshold's dependence on that budget is itself the
subject of the density-decay analysis below. We locate the
transition by fitting a logistic
$\Psucc(m) = [1 + e^{(m - m_c)/w}]^{-1}$ to the
$200$-trial success rates on each rank, reading off the $50\%$ threshold $m_c$
and the width $w$. (We reserve $p$ throughout for the per-subset feasibility
\emph{density} introduced below; $\Psucc$ is the solver's \emph{success
probability}. The density-decay analysis then derives the former's control over
the latter.) The width collapses by an order of magnitude across the
range, $w: 3.52 \to 0.24$ as $r: 4 \to 8$ (Table~\ref{tab:scaling}) --- the
transition tightens toward a genuine step. An integer $m$-grid still resolves a
sub-unit width, since the logit of the success rate falls by $1/w$ between
adjacent $m$: bootstrapping the $200$-trial cells puts the $r \ge 5$ widths
within $\pm 15\%$ (at $r = 8$, $0.24 \in [0.20, 0.28]$). The loose entry is
instead $r = 4$, $[2.64, 5.20]$, whose window does not span its own transition;
the order-of-magnitude collapse survives at either extreme. We then rule out
the three natural closed-form explanations. First, the threshold is not a
universal constant: it is $r$- and $m$-dependent, excluding an
approximation-ratio value such as the Goemans--Williamson constant
\cite{Goemans1995} (the $\sim\!87\%$ ``wall'' is not that constant). Second,
it is not set by the extreme-ray count $k_c$, which varies by a factor of two
across the fitted thresholds (Table~\ref{tab:scaling}). Third --- and most
tellingly --- it is not the \emph{statistical dimension} of the cones.

\begin{table}[h]\centering
\caption{Logistic-fitted 50\% threshold $m_c$, transition width $w$, and ray
count $k_c$ at threshold (200 trials/cell), \emph{at the stock search budget}
($5000$-subset pool per side, top-$200$ walk). These $m_c$ are the operating
point of that fixed-budget sampler, not intrinsic constants: they shift right
with the budget (Table~\ref{tab:budget}); the budget-free threshold is the
existence transition of Table~\ref{tab:clump}.}\label{tab:scaling}
\begin{tabular}{cccc}
\toprule
$r$ & $m_c$ & width $w$ & $k_c$ \\
\midrule
4 & 13.47 & 3.52 & 16.0 \\
5 & 11.90 & 1.33 & 29.7 \\
6 & 10.08 & 0.69 & 36.6 \\
7 & \phantom{0}9.47 & 0.46 & 33.5 \\
8 & \phantom{0}9.48 & 0.24 & 28.6 \\
\bottomrule
\end{tabular}
\end{table}

\begin{figure}[h]\centering
\includegraphics[width=0.6\linewidth]{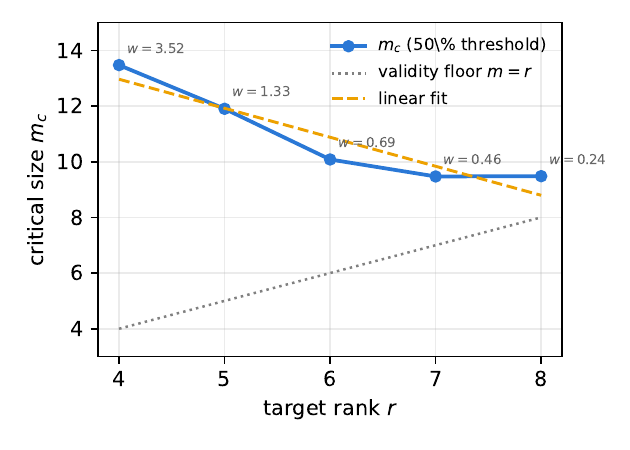}
\caption{Fixed-budget critical size $m_c$ vs.\ $r$ (stock $5000$-pool sampler),
approaching the validity floor $m = r$; annotated transition widths collapse
toward a step. The curve is the sampler's operating boundary; it translates
right as the budget grows (Table~\ref{tab:budget}).}
\label{fig:scaling}
\end{figure}

\paragraph{Ruling out the statistical dimension.}
The natural conic-geometry hypothesis \cite{Amelunxen2014} is that the
transition sits where the statistical dimension
$\delta(C_i) = \mathbb{E}_g\,\|\Pi_{C_i}(g)\|^2$ crosses a critical value,
$g \sim \mathcal N(0, \Idn_r)$. Unlike the canonical cones with closed-form
intrinsic volumes (orthant, second-order, PSD, or the descent cones of
structured norms), our $C_i$ are generic data-dependent polyhedra: their
intrinsic volumes are angle sums over all faces, built from spherical-simplex
(Schl\"afli) solid angles that have no elementary form for $r \ge 3$, so no
analytic $\delta$ is available and we estimate it by Monte-Carlo through
Moreau's decomposition. For $C = \{x : \Ao x \ge 0\}$ the polar cone is
$C^\circ = \{-\Ao^{\!\top}\lambda : \lambda \ge 0\}$, and
$\|\Pi_C(g)\|^2 = \|g\|^2 - \|\Pi_{C^\circ}(g)\|^2$ with
$\Pi_{C^\circ}(g) = -\Ao^{\!\top}\alpha^\star$,
$\alpha^\star = \arg\min_{\alpha \ge 0}\|\Ao^{\!\top}\alpha + g\|^2$ a
nonnegative least-squares solve. Averaging over Gaussian $g$ gives $\delta$
(validated on the orthant: $\delta(\R^6_+) = 2.997$ vs.\ the exact $3$);
Table~\ref{tab:statdim} gives $\delta$ at the fitted thresholds. The same
$\alpha^\star$ yields the full intrinsic-volume profile below, at no extra
cost: by complementary slackness the face on which $g$ projects has dimension
$r - \|\alpha^\star\|_0$. Two
facts refute the hypothesis (Figure~\ref{fig:almt}). $\delta(C_1)$ is nearly
\emph{constant in $m$} across the whole transition --- for $r = 6$ it moves
only $4.84 \to 4.72$ as $m: 8 \to 12$ while the success rate falls
$96\% \to 10\%$ --- so it cannot be the quantity that crosses; and no
normalisation of $\delta_c$ is invariant in $r$. A quantity that barely moves
cannot drive a step transition.

\begin{table}[h]\centering
\caption{Statistical dimension at the fitted threshold $m_c$ (Moreau/NNLS
Monte-Carlo). $\delta(C_1)$ tracks $r$, not the
transition.}\label{tab:statdim}
\begin{tabular}{cccc}
\toprule
$r$ & $m_c$ & $\delta(C_1)$ & $\delta(C_1)+\delta(C_2)$ \\
\midrule
4 & 13.47 & 2.78 & \phantom{0}5.58 \\
5 & 11.90 & 3.70 & \phantom{0}7.42 \\
6 & 10.08 & 4.77 & \phantom{0}9.60 \\
7 & \phantom{0}9.47 & 5.75 & 11.53 \\
8 & \phantom{0}9.48 & 6.81 & 13.59 \\
\bottomrule
\end{tabular}
\end{table}

\begin{figure}[h]\centering
\includegraphics[width=\linewidth]{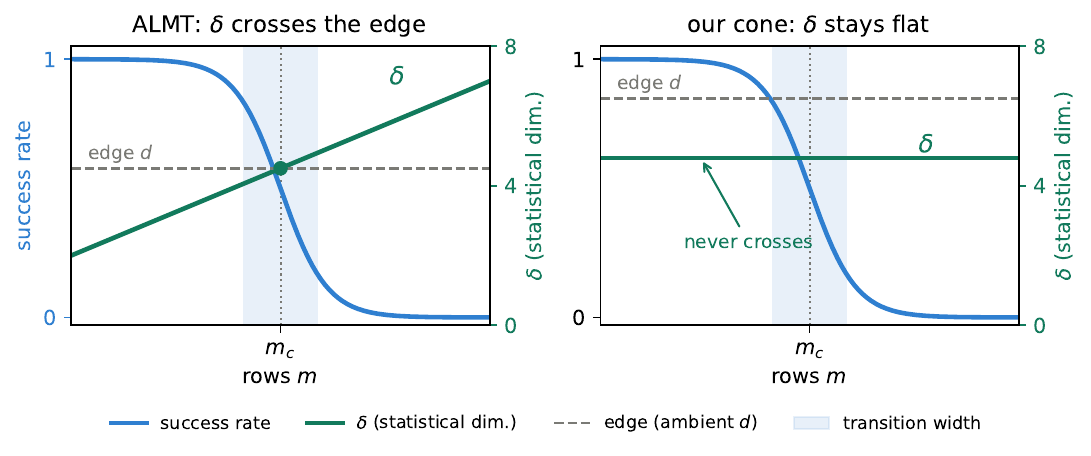}
\caption{Why the statistical dimension does not locate the transition
(schematic). \emph{Left:} in the ALMT picture the transition sits where
$\delta$ rises through the ambient edge $d$ --- the crossing \emph{is} the
threshold, width $O(\sqrt{d})$. \emph{Right:} for the witness cone the success
curve falls with the same shape, but $\delta$ is flat and never crosses. The
width survives as the second-moment effect that Table~\ref{tab:profilevar}
pins at $\mathrm{Var} \approx 0.90$.}\label{fig:almt}
\end{figure}

\paragraph{The full profile: variance controls the width.}
The statistical dimension is only the \emph{mean} of the cone's
intrinsic-volume profile $v_0,\dots,v_r$, where $v_k$ is the probability that
$\Pi_{C_1}(g)$ lands in the relative interior of a $k$-dimensional face
\cite{AmelunxenLotz2017}. The same projection returns the whole profile: by
complementary slackness the landed face has dimension
$r - \|\alpha^{\star}\|_0$ (validated on the orthant's exact binomial
$v_k = \binom{r}{k}2^{-r}$, max deviation $0.004$). Scanning every functional,
only the \emph{variance} $\mathrm{Var} = \sum_k (k-\delta)^2 v_k$ is both
(i)~near-invariant at the threshold, $\mathrm{Var}(C_1) \approx 0.90$ across
$r = 4,\dots,8$ (relative spread $(\max-\min)/\mathrm{mean} \approx 0.06$), and (ii)~genuinely
\emph{sweeping} in $m$ (Table~\ref{tab:profilevar}) --- unlike the solid angle
$v_r \approx 0.23$, flat across the window and characterising nothing. This is
the object conic integral geometry attaches to the transition \emph{width}
\cite{Amelunxen2014}. Since $\mathrm{Var}(r,m)$ is a function of \emph{two}
variables that sweeps in $m$, the equation $\mathrm{Var}(r,m) = 0.90$ is a level
set --- a curve $m(r)$ in the $(r,m)$ plane, not a single value. The point is
that the stock-budget transitions of every rank align on this \emph{one} level;
so solving $\mathrm{Var}(r,m) = 0.90$ for $m$ at each fixed $r$ (the $m$-sweep
supplies the varying solution) recovers $m_c(r)$ to within $\pm 0.3$ for
$r \ge 6$, with an $O(1)$ correction at the smallest ranks. The common level
$0.90$ is itself a stock-budget calibration, not a budget-invariant constant: it
is where the fixed-budget sampler's transition happens to sit, and at a larger
pool the transition slides along the $\mathrm{Var}$-sweep to a different level. The profile pins the \emph{sharpness};
the \emph{location} admits no exact smooth-conic law, reinforcing
Conjecture~\ref{conj:comb}.

\begin{table}[h]\centering
\caption{Face-dimension variance $\mathrm{Var}(C_1)$ at $m_c$, its sweep
across the scanned window, and the $m$ that solves $\mathrm{Var}(r,m) = 0.90$
(the level set the stock-budget transitions align on). Unlike $\delta$, the
variance both sweeps with $m$ and is near-constant at $m_c$.}\label{tab:profilevar}
\begin{tabular}{ccccc}
\toprule
$r$ & $m_c$ & $\mathrm{Var}(C_1)$ at $m_c$ & Var sweep over window
& $m$ at $\mathrm{Var}{=}0.90$ \\
\midrule
4 & 13.47 & 0.88 & $0.81$--$0.88$ & $\approx 14.5$ \\
5 & 11.90 & 0.93 & $0.88$--$0.97$ & $\approx 11.2$ \\
6 & 10.08 & 0.91 & $0.84$--$0.99$ & $\approx 9.9$ \\
7 & \phantom{0}9.47 & 0.90 & $0.79$--$0.98$ & $\approx 9.5$ \\
8 & \phantom{0}9.48 & 0.91 & $0.84$--$1.01$ & $\approx 9.3$ \\
\bottomrule
\end{tabular}
\end{table}

Since $\delta$ is flat while the extreme-ray count grows sharply with $m$
(e.g.\ $k: 18 \to 54$ for $r = 6$), the transition is \emph{combinatorial}:
governed by the configuration of the extreme rays --- the shrinking
\emph{fraction} of $r$-subsets that admit a witness --- not by the solid angle
of the cone.

\paragraph{The same holds for the one-sided solver.}
The one-sided solver of \S\ref{sec:solver} extends the solvable regime, so its
own findability transition sits well to the right of the witness's
($m_c^{\mathrm{OS}} \approx 20$ and $15$ for $r = 5, 6$; broad and beyond
$m = 40$ at $r = 4$, sharpening with $r$ as the witness transition does). Both
diagnostics carry over. The statistical dimension is again flat across it ---
$\delta(C_1) + \delta(C_2) \approx 5.1, 7.2, 9.2$ for $r = 4, 5, 6$, unchanged
in $m$ --- so the $\delta$ ruling-out is solver-independent. And the profile
variance again pins the width, at $\mathrm{Var}(C_1) \approx 1.0$, higher than
the witness's $0.90$ precisely because the transition sits at larger $m$. The
combinatorial-location / conic-width dichotomy thus governs the \emph{effective}
solver, not merely the two-sided witness. As with the witness, this location is
budget-relative: the quoted $m_c^{\mathrm{OS}}$ are for the default pool, and
enlarging it (\S\ref{sec:solver}) shifts them substantially to the right
(Table~\ref{tab:budget}).

\paragraph{The threshold is budget-relative.}
The transition above is measured at one search budget, and it is not an
existence boundary. By completeness (Theorem~\ref{thm:complete}) a witness
exists for every $m$, and direct sampling confirms that ray-economical
witnesses \emph{persist} far past $m_c$: at $r = 6, m = 17$ the feasible
$r$-subsets for the one-sided solver number in the tens of thousands, yet form
a vanishing \emph{fraction} $p(m,r) \approx 8\times10^{-5}$ of all subsets. A
pool of $B$ sampled candidates finds one when $B\,p(m,r) \gtrsim 1$, so the
empirical threshold sits at $p(m_c) \approx 1/B$ and \emph{moves} as the pool
grows. Table~\ref{tab:budget} bears this out: a $12$--$25\times$ larger budget
shifts the $50\%$ point right by roughly $2$ in $m$ on both the witness and the
one-sided solver. Fitting quantifies both halves. First, $\ln p(m,r)$ is
\emph{linear} in $m$ by uniform sampling: the decay is
geometric, $p \approx e^{-c(r)\,m}$ with $c = 0.45$ at $r = 6$
($R^2 = 0.99$, fitted over $m = 10, \dots, 25$) and $c = 0.34$ at $r = 5$
($R^2 = 0.98$, over $m = 12, \dots, 30$), under adaptive sampling that draws until
every $m$ has resolved at least
$30$ feasible subsets. Resolution matters here: a median taken over $0$--$1$ hits
is a floor rather than a measurement, and retaining such points biases the slope
upward (at $r = 6$, to $0.48$); shallower but still-floored sweeps give $0.46$, so
we quote $c$ to two figures only. Second, the induced
threshold $m_c(B)$ --- the $m$ at which a uniform $B$-sampler crosses $50\%$ ---
is linear in $\ln B$: at $r = 6$, over $6$ budgets spanning $2.5$ decades, the
slope is $2.33$ against $1/c = 2.24$ ($4\%$; $R^2 = 0.999$). Since a crossing can
be read only where the density itself is resolvable, the accessible span is set by
sampling depth, and at $r = 5$ only $4$ budgets ($1.5$ decades) survive. At $r = 6$
the span does admit a curvature test: the fitted $(\ln B)^2$ coefficient is
$+0.03$, and the linear fit's residuals ($\pm 0.2$ in $m_c$) are comparable to the
error of interpolating a crossing on an integer $m$-grid. The data are therefore
consistent with a straight log law but do not exclude mild curvature at that
level, and we claim no more. Third, the same rate fixes the transition \emph{width}. Substituting
$p \approx e^{-c\,m}$ into the finite-budget law gives
$\Psucc(m) \approx 1 - \exp(-B\,e^{-c\,m})$, a reversed Gumbel whose midpoint
slope, matched to the logistic's $1/(4w)$, yields
\begin{equation}\label{eq:width}
  m_c = (\ln B)/c(r) + \mathrm{const}, \qquad w = \frac{1}{2\ln 2\;c(r)},
\end{equation}
so the budget \emph{relocates} the transition without sharpening it. We stress
that~\eqref{eq:width} is a \emph{corollary} of the exponential density law, not an
independent finding: once $\ln p$ is linear in $m$, $\Psucc$ is exactly a reversed
Gumbel of scale $1/c$, and both halves of~\eqref{eq:width} follow as algebra. The
numerical check below therefore tests two things only --- that $p$ is close enough
to a pure exponential over the fitted range, and that the constant $1/(2\ln 2)$,
which we obtained by matching midpoint slopes, survives least-squares fitting of a
logistic to a Gumbel over a discrete grid. It is a consistency check of the chain,
not fresh evidence for budget-relativity beyond the linearity of $\ln p$ itself.
With that said, the check passes under uniform sampling across
$r = 4, \dots, 7$: the fitted slope
matches $1/c$ to within $5\%$ (typically $1$--$2\%$), and the width
matches~\eqref{eq:width} to within $20\%$ --- to within $13\%$ for
$r \ge 5$, with $r = 4$ the outlier, its width being the largest and hence the
worst resolved on a discrete $m$-grid. Three scope caveats. Budgets are chosen as
$\ln 2 / p(m^{\ast})$ so the crossing sits inside the sampling window, which
confines each rank to a $\approx\!5$--$7\times$ span in $B$: the log law itself
walks the transition out of any fixed $m$-window, so the width's
budget-independence is tested over that span, not over decades. Ranks whose
density falls below the Monte-Carlo resolution ($p < 3/N_s$) are dropped rather
than fitted, since a median over $0$--$1$ hits is noise. And averaging over the
matrix ensemble broadens the curve a further $1.3$--$2.9\times$ relative to the
\emph{median-density} curve $1 - (1-p_{\mathrm{med}}(m))^{B}$, the broadening
growing as $r$ falls. That reference curve is a construct, not a single cone
tracked across $m$: it feeds the median density of each size's population through
the sampler, discarding matrix-to-matrix spread so that only sampling randomness
remains --- which is precisely the quantity~\eqref{eq:width} predicts. Table~\ref{tab:scaling}'s widths measure a
categorically different quantity and are not comparable to~\eqref{eq:width}. At a
fixed budget the two-sided witness solver is \emph{deterministic} given $A$ --- the
obtuseness ranking either enumerates every $r$-subset or draws a seeded pool, so no
per-instance randomness survives, and repeated runs on one matrix return an
identical verdict. Its success rate at each $m$ is therefore the \emph{fraction of
random matrices} that are solvable, and the fitted width measures the heterogeneity
of the matrix ensemble, not a budget-limited sampling transition;
Equation~\eqref{eq:width} governs the latter. The $2$--$4\times$ gap between the
two is thus a comparison of unlike quantities, and we read no ranking effect --- or
any other single cause --- out of it. (This is consistent with the budget-relativity
of Table~\ref{tab:budget}: the budget is a solver \emph{parameter}, and moving it
moves which matrices are solvable.)
The same rate $c(r)$ thus governs the density decay, the
threshold's budget-dependence, and the transition width. What is intrinsic is therefore not the threshold
\emph{location} but the \emph{decay} $c(r)$ of witness density, which the
intrinsic-volume profile tracks; the statistical dimension, a first-moment solid
angle, is flat and cannot.

\begin{table}[h]\centering
\caption{Recovery (\% of trials) vs.\ search budget at $r = 6$. Enlarging the
candidate pool shifts the transition to the right on both solvers: the
threshold is budget-relative, not an intrinsic constant. One-sided budget is
the pool size (top-$200$ walk); witness budget is pool size / walk depth per
side.}\label{tab:budget}
\begin{tabular}{llccc}
\toprule
solver & budget & \multicolumn{3}{c}{recovery} \\
\midrule
          &                & $m{=}13$ & $m{=}15$ & $m{=}17$ \\
one-sided & pool $5000$    & 78 & 53 & 28 \\
one-sided & pool $20000$   & 95 & 83 & 63 \\
one-sided & pool $60000$   & 98 & 94 & 77 \\
\midrule
          &                & $m{=}10$ & $m{=}11$ & $m{=}12$ \\
witness   & $5000/200$     & 50 & 23 & \phantom{0}7 \\
witness   & $20000/500$    & 85 & 59 & 31 \\
witness   & $60000/1000$   & 96 & 86 & 54 \\
\bottomrule
\end{tabular}
\end{table}

\paragraph{The budget-free limit and witness clumping.}
As $B \to \infty$ the sampler resolves pure \emph{existence}, and existence does
eventually transition: for $r = 4$ --- the only rank with $\binom{k_1}{r}$ small
enough to enumerate every subset exactly --- the fraction of matrices possessing
\emph{any} one-sided-feasible $r$-subset falls through $50\%$ near
$m \approx 46$ (Table~\ref{tab:clump}). But it is \emph{not} the first moment
that governs this. Note $E[N] = \binom{k_1}{r}\,p$ is exact (linearity, no
independence needed) and stays near $10$ across the whole transition; \emph{were
the subsets independent}, existence would be $1 - e^{-E[N]} \approx 1$
(near-certain), yet the true existence is only $0.52$--$0.72$ --- the first
moment over-predicts existence by $\approx 0.4$ and is vacuous. The cause is
severe over-dispersion (variance-to-mean $\sim\!40$--$100$):
feasible subsets occur in correlated \emph{clumps}, families sharing $r - 1$
rays with $\sim\!10$--$16$ members each, so a matrix has either none or dozens.
The number of \emph{clumps}, by contrast, behaves as Poisson --- existence
tracks $1 - e^{-\lambda}$ with $\lambda = \mathbb{E}[\#\mathrm{clumps}]$, and the
transition sits at $\lambda = \ln 2 \approx 0.69$ (Table~\ref{tab:clump}, where
$\mathbb{E}[\#\mathrm{clumps}]$ crosses $0.69$ just as existence crosses $50\%$).
This is the second-moment signature already visible in the profile
\emph{variance}: the same clumping that flattens the mean $\delta$ and voids the
raw count $E[N]$ is what the variance detects. (At $r = 5$ the clumping is
stronger still --- variance-to-mean up to $\sim\!1500$ --- but its existence
transition lies beyond exact enumeration.)

\begin{table}[h]\centering
\caption{Budget-free (existence) transition at $r = 4$, exact by enumerating all
$\binom{k_1}{4}$ subsets ($200$ trials/cell). The first moment is vacuous ($E[N]$ flat,
$1-e^{-E[N]}\approx1$); the \emph{clump count} is the Poisson variable, with
existence $\approx 1 - e^{-\mathbb{E}[\#\mathrm{clu}]}$ and the transition at
$\mathbb{E}[\#\mathrm{clu}] = \ln 2$.}\label{tab:clump}
\begin{tabular}{cccccc}
\toprule
$m$ & existence & $E[N]$ & $\mathrm{Var}/E[N]$ & $\mathbb{E}[\#\mathrm{clu}]$
& $1-e^{-\mathbb{E}[\#\mathrm{clu}]}$ \\
\midrule
28 & 0.72 & 11.7 & \phantom{0}42 & 1.08 & 0.66 \\
34 & 0.67 & \phantom{0}9.5 & \phantom{0}40 & 0.89 & 0.59 \\
40 & 0.59 & \phantom{0}8.2 & \phantom{0}37 & 0.73 & 0.52 \\
46 & 0.52 & \phantom{0}9.9 & \phantom{0}98 & 0.64 & 0.47 \\
\bottomrule
\end{tabular}
\end{table}

\paragraph{A unified budget law.}
The finite-budget shifts (Table~\ref{tab:budget}) and the budget-free existence
transition (Table~\ref{tab:clump}) are two limits of \emph{one} budget-indexed
quantity. A search that samples $B$ candidates succeeds iff it \emph{discovers} a
clump, and the expected number of clumps it discovers is
\[
  \lambda(B) \;=\; \mathbb{E}\Bigl[\,\textstyle\sum_{\text{clumps}}
  \bigl(1 - (1 - s/\Npool)^{B}\bigr)\Bigr],
  \qquad P[\text{success}] \;\approx\; 1 - e^{-\lambda(B)},
\]
with $\Npool = \binom{k_1}{r}$ the candidate-pool size and $s$ a clump's size
(distinct from the decay rate $c(r)$ above). Two \emph{exact} limits follow.
For small $B$ (no clump sampled twice) $\lambda(B) \to B\,p$ with
$p = E[N]/\Npool$,
recovering the finite-budget law $1 - e^{-Bp}$ and threshold $p(m_c) \approx 1/B$;
as $B \to \infty$ every clump is found, $\lambda(B) \to
\mathbb{E}[\#\mathrm{clumps}]$, recovering the existence law with threshold
$\ln 2$. Between them $\lambda(B)$ \emph{saturates} as repeated samples re-hit
known clumps, so the naive first moment $B\,p$ --- blind to this redundancy ---
over-predicts grossly while $\lambda(B)$ does not. Enumerating the $r = 4$ clump
structure and sweeping $B$ from $1$ to exhaustive (Table~\ref{tab:unify}) bears
this out: $1 - e^{-\lambda(B)}$ follows the true sampler success across four
decades of budget --- tightly in the finite-budget regime, and with the same
one-sided sub-Poisson deficit of about $0.05$ at the existence end, consistent
with the clump-exclusion correction discussed below.

\begin{table}[h]\centering
\caption{The unified budget law ($r = 4$, $m = 40$; $150$ cones, exact,
$\mathbb{E}[\#\mathrm{clumps}] = 0.77$, existence $= 0.58$). $\lambda(B)$
interpolates from $B\,p$ (finite budget) to $\mathbb{E}[\#\mathrm{clumps}]$
(existence); $1 - e^{-\lambda(B)}$ tracks the true sampler success across all
budgets (within a small sub-Poisson deficit), while the naive $B\,p$
diverges.}\label{tab:unify}
\begin{tabular}{rcccc}
\toprule
$B$ & actual success & $1 - e^{-\lambda(B)}$ & $\lambda(B)$ & $B\,p$ \\
\midrule
$10^2$ & $0.037$ & $0.037$ & $0.037$ & $0.042$ \\
$10^3$ & $0.209$ & $0.193$ & $0.214$ & $0.420$ \\
$10^4$ & $0.491$ & $0.437$ & $0.575$ & $4.20$ \\
$10^5$ & $0.579$ & $0.531$ & $0.756$ & $42.0$ \\
$10^6$ & $0.580$ & $0.535$ & $0.767$ & $420$ \\
\bottomrule
\end{tabular}
\end{table}

One rigorous anchor bounds the phenomenon: a \emph{simplicial} recovery cone
always yields a witness, so failure requires the cone to have more than $r$
extreme rays.

\begin{proposition}[Simplicial recovery cones are always feasible]\label{prop:r2}
Write $A = \Ao\Aoo^{\!\top} \ge 0$ with $\Ao \in \R^{m\times r}$,
$\Aoo \in \R^{n\times r}$, and $C_1 = \{x\in\R^r : \Ao x \ge 0\}$,
$C_2 = \{y\in\R^r : \Aoo y \ge 0\}$. Then the polar cone satisfies
$C_1^{\circ} \subseteq C_2$. Consequently, whenever $C_1$ is \emph{simplicial}
--- exactly $k_1 = r$ extreme rays $R$ --- the unique $r$-subset witness
$W = \Ao R$, $H = \Aoo (R^{-1})^{\!\top}$ is feasible; a findability failure
requires $k_1 > r$. In particular, at $r = 2$ a pointed cone in $\R^2$ has
exactly two extreme rays, so the witness is \emph{always} feasible and no
transition ever occurs; for $r \ge 3$ the simplicial regime is confined to the
smallest $m$ (empirically $P[k_1 = r] = 1$ only at $m = r$, falling below
$0.4$ already at $m = r+1$), so the transition lives entirely in the $k_1 > r$
regime.
\end{proposition}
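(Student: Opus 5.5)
The plan is to prove the cone inclusion by a Farkas/duality computation that uses only $A\ge0$, and then to read off feasibility from Theorem~\ref{thm:onesided}(ii). One caveat on signs: with the convention of \S\ref{sec:phase}, $C_1^{\circ}=\{-\Ao^{\!\top}\lambda:\lambda\ge0\}$, the inclusion that actually holds, and that the argument needs, is for the \emph{dual} cone. So I would state and prove it as
\[
C_1^{*}:=\{y:\langle y,x\rangle\ge0\ \forall x\in C_1\}=-C_1^{\circ}\subseteq C_2 .
\]
The $C_1^{\circ}$ in the statement should be read in this dual sense.

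\textbf{Step 1: the inclusion.} Since $C_1=\{x:\Ao x\ge0\}$ is a finitely generated intersection of half-spaces, Farkas' lemma gives $C_1^{*}=\{\Ao^{\!\top}\lambda:\lambda\in\Rp^{m}\}$. Take $y=\Ao^{\!\top}\lambda$ with $\lambda\ge0$. Then
\[
\Aoo y=\Aoo\Ao^{\!\top}\lambda=A^{\!\top}\lambda\ge0,
\]
because $A\ge0$ entrywise. Hence $y\in C_2$. This is the entire use of nonnegativity of $A$.

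\textbf{Step 2: the simplicial case.} Suppose $C_1=\cone(R)$ with $R\in\R^{r\times r}$ and $k_1=r$. First, $R$ is invertible: $C_1$ is full-dimensional because, by the symmetric form of Step 1, $C_2^{*}=\{\Aoo^{\!\top}\mu:\mu\ge0\}\subseteq C_1$, and this set spans $\R^r$ since $\Aoo$ has full column rank. For a simplicial cone the dual is generated by the rows $\rho_a$ of $R^{-1}$. Indeed, $R^{-1}R=\Idn_r$ gives $\langle\rho_a,r_j\rangle=\delta_{aj}\ge0$. Conversely, any $y$ with $y^{\!\top}R\ge0$ satisfies $y^{\!\top}=(y^{\!\top}R)R^{-1}$, a nonnegative combination of the $\rho_a$. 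So each $\rho_a$ lies in $C_1^{*}\subseteq C_2$, and Theorem~\ref{thm:onesided}(ii) gives $H=\Aoo(R^{-1})^{\!\top}\ge0$. Also $W=\Ao R\ge0$, since the columns of $R$ lie in $C_1$, and $WH^{\!\top}=\Ao RR^{-1}\Aoo^{\!\top}=A$. Taking the contrapositive, a findability failure forces $k_1>r$.

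\textbf{Step 3: the case $r=2$.} $C_1$ is pointed because $\Ao$ has full column rank, and it is full-dimensional by Step 2. A pointed, full-dimensional cone in $\R^2$ is bounded by exactly two rays, so $k_1=2=r$ and Step 2 applies to every instance. Hence no transition can occur at $r=2$.

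The claims about $P[k_1=r]$ for $r\ge3$ are empirical and I would simply cite the measurements. I expect the main obstacle to be bookkeeping rather than depth: fixing the polar/dual sign convention, and justifying the full-dimensionality of $C_1$ so that $R$ is square and invertible. The $C_2^{*}\subseteq C_1$ argument handles the latter.
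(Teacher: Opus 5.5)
Your proof is correct and follows the paper's route. The rows of $\Ao$ generate the dual of $C_1$ and lie in $C_2$ because $A\ge0$; in the simplicial case the rows of $R^{-1}$ generate that same dual cone, so $H=\Aoo(R^{-1})^{\!\top}\ge0$. Your sign caveat is right: the paper's proof silently uses $C_1^{\circ}$ in the dual sense ($C_1^{\circ}=\cone\{a_i\}$), which conflicts with its own polar formula $C^{\circ}=\{-\Ao^{\!\top}\lambda:\lambda\ge0\}$ in \S\ref{sec:phase}. Your full-dimensionality argument also supplies two things the paper takes for granted: that $R$ is invertible, and that there are exactly two rays at $r=2$.
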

\begin{proof}
Let $a_i$ be the $i$-th row of $\Ao$; then
$C_1 = \{x : \langle a_i, x\rangle \ge 0\ \forall i\} = (\cone\{a_i\})^{\circ}$,
so by the bipolar theorem for finitely generated (hence closed) cones,
$C_1^{\circ} = \cone\{a_i\}$. Each $a_i \in C_2$: the $j$-th entry of
$\Aoo\,a_i$ is $\langle \beta_j, a_i\rangle = (\Ao\Aoo^{\!\top})_{ij} = A_{ij}\ge 0$,
where $\beta_j$ is the $j$-th row of $\Aoo$; hence $\cone\{a_i\}\subseteq C_2$, giving
$C_1^{\circ}\subseteq C_2$. When $k_1 = r$ the columns of $R$ are all the extreme
rays, so $\cone(R) = C_1$ and $\dual(\cone R) = C_1^{\circ}$; the rows of $R^{-1}$
are the extreme rays of $C_1^{\circ}$, so $H = \Aoo(R^{-1})^{\!\top}\ge 0$ iff
$C_1^{\circ}\subseteq C_2$, which always holds, while $W = \Ao R \ge 0$ since the
columns of $R$ lie in $C_1$. When $k_1 > r$ a simplicial $r$-subcone
$\cone(R_S)$ is a \emph{proper} subcone of $C_1$, so
$\dual(\cone R_S) \supsetneq C_1^{\circ}$ can exceed $C_2$ --- the source of the
transition.
\end{proof}

Proposition~\ref{prop:r2} also sharpens the exists/findable split: the polar
$C_1^{\circ}\subseteq C_2$ unconditionally (the \emph{full}-cone witness always
works), so what transitions once $k_1 > r$ is strictly the \emph{economy} of
using only $r$ rays. For that regime we conjecture the following.

\begin{conjecture}[Two-scale threshold: density decay and witness clumping]
\label{conj:comb}
Two thresholds govern findability --- the small- and large-$B$ limits of the
budget law $\lambda(B)$ above. \emph{(i) Finite budget.} The density
$p(m,r)$ of witness-compatible $r$-subsets among the extreme rays of the random
cone $\col(W) \cap \R^m_{\ge0}$ decays geometrically in $m$, $p \approx
e^{-c(r)\,m}$, so a $B$-candidate sampler's threshold solves
$p(m_c, r) \approx 1/B$ and $m_c = (\ln B)/c(r) + \mathrm{const}$ grows
logarithmically in $B$ --- both verified by direct fitting above ($R^2 = 0.99$
and $0.999$, over the resolvable budget range). We conjecture that the rate $c(r)$ is set by the angular
(intrinsic-volume) profile of the cone, not by any smooth conic invariant such
as the statistical dimension.
\emph{(ii) Budget-free limit.} Witness-compatible subsets are strongly
over-dispersed, occurring in correlated clumps; the clump count is
\emph{approximately} Poisson --- mildly sub-Poisson, as clumps weakly exclude one
another within a cone (measured dispersion $\mathrm{Var}/\mathbb{E} \approx
0.7$--$1.0$) --- so existence is $1 - e^{-\lambda(m,r)}$ up to that exclusion
correction, with $\lambda = \mathbb{E}[\#\mathrm{clumps}]$ and the intrinsic
transition near $\lambda(m^\ast, r) \approx \ln 2$ --- not $E[N] = 1$, which the
over-dispersion renders vacuous. The two scales draw on \emph{different} probability. The
finite-budget density~(i) is a \emph{single-subset} quantity: it is settled one
subset at a time, fixing $p(m,r)$ and hence the first moment
$E[N] = \binom{k_1}{r}\,p$. It poses an absorption question of Wendel--Cover
\emph{shape} \cite{Wendel1962, Kabluchko2017} --- do $r$ prescribed rays fall
inside a fixed cone? --- but does not inherit their closed form, which rests on
independence: as the probes below show, $p$ does \emph{not} factor over the $r$
rays. It is thus single-subset without being single-ray, and we use
``marginal'' below only in the latter sense. The budget-free limit~(ii) is not
even single-subset: because the indicators \emph{across} subsets are strongly
dependent, $E[N]$ (and any Wendel-type count) is uninformative, and existence is
instead a Poisson approximation for \emph{dependent} events, governed by the
clump structure. Both
belong to random-polytope theory \cite{Barany2008, Reitzner2010} and the
combinatorics of random cones \cite{HugSchneider2016} rather than to conic
integral geometry.
\end{conjecture}

\paragraph{Toward a proof.} Proposition~\ref{prop:r2} settles $r \le 2$. For the
budget-free threshold at $r \ge 3$ the clumping picture suggests a Chen--Stein
route, and we have probed its two premises directly. Write
$N = \sum_S \mathbf{1}[S\text{ feasible}]$ over simplicial $r$-subsets $S$; $N$ is
far from Poisson ($\mathrm{Var}/\mathbb{E}[N]\approx 40$--$1500$,
Table~\ref{tab:clump}), so one passes to the \emph{declumped} count $N^\ast$ (one
representative per Johnson-graph component), for which Chen--Stein gives
$\bigl|\Pr[N^\ast=0]-e^{-\lambda}\bigr| \le b_1 + b_2 + b_3$,
$\lambda=\mathbb{E}[\#\mathrm{clumps}]$. First premise --- \emph{locality} of the
clump --- holds cleanly: the within-cone conditional feasibility
$g(o)=\Pr[S'\text{ feasible}\mid S\text{ feasible},\,|S\cap S'|=o]$, relative to
the baseline $p$, is enhanced $\approx\!40$-fold at $o=r-1$, falls to $\approx\!3$
at $o=r-2$, and to $\approx\!0$ for disjoint subsets, so the dependency
neighbourhood is small and $b_1,b_2$ are controllable. Second premise ---
\emph{decorrelation at range} --- holds only in a corrected form: disjoint
subsets are not independent but weakly \emph{anti}-correlated (clumps exclude one
another), leaving $N^\ast$ mildly sub-Poisson (dispersion $0.7$--$1.0$), so the
honest target is not ``$N^\ast$ Poisson'' but the Chen--Stein \emph{bound} with an
$O(\lambda^2)$ exclusion term --- which quantitatively matches the observed
$\sim\!0.05$--$0.07$ excess of existence over $1-e^{-\lambda}$
(Table~\ref{tab:clump}). Two pieces remain genuinely open: controlling $b_3$ ---
a decorrelation estimate for well-separated subsets, hard because all indicators
read \emph{one} random cone, most plausibly via a Poisson limit for the
extreme-ray process --- and computing $\lambda(m,r)$ from a second-order joint
absorption against the intrinsic-volume profile.

The finite-budget rate $c(r)$ is no simpler. It is fixed by the per-subset
density $p$, itself the joint absorption
$p=\Pr\!\bigl[\text{all } r \text{ dual rays } u_i \text{ of } R_S^{-1}\in C_2\bigr]$,
and direct measurement rules out every \emph{marginal} explanation: $C_2$'s solid
angle is flat in $m$; the dual cone $\dual(\cone R_S)$ widens $\sim\!8\times$ too
slowly; and the single-ray absorption $q=\Pr[u_i\in C_2]\approx 0.5$ --- biased a
stable $\approx\!2.5\times$ into $C_2$ by $C_1^{\circ}\subseteq C_2$
(Prop.~\ref{prop:r2}) --- decays $\sim\!15$--$20\times$ too slowly
($c/c_q\gg r$). The density lies far below the independent product $q^r$, and the
gap \emph{widens} with $m$: the $r$ dual rays are negatively correlated for
simultaneous membership, so $c(r)$ too is an irreducibly joint (multi-ray)
absorption rather than any single solid angle. We leave both rates open.

\section{Empirical Study and Comparison}\label{sec:exp}
Throughout, trials draw $W, H \sim \mathrm{U}[0,1]^{m \times r}$ and set
$A = W H^{\!\top}$ (exact rank-$r$, nonnegative); reconstruction error is
$\|A - \widetilde W \widetilde H^{\!\top}\|_F / \|A\|_F$, at machine precision
whenever the toolkit succeeds (\S\ref{sec:witness}).

\paragraph{Comparison with multi-start heuristics.}
To situate the witness against the workhorse of exact
NMF~\cite{vandaele2016heuristics, gillis2020book} --- multi-start
alternating-NNLS (HALS), multiplicative updates (MU)
\cite{lee2000algorithms}, and, as the strongest standard baseline,
\textsc{nndsvda}-initialized block-coordinate descent (CD)
\cite{cichocki2009hals, boutsidis2008nndsvd} --- we compare on the same
$m = n = 10$ generator ($50$ trials per $r$; multi-start heuristics get $25$
restarts of $500$ iterations, CD a single deterministic pass, best kept),
including our solvers --- the two-sided hybrid witness, the
one-sided relaxation, and the two-sided union of \S\ref{sec:solver}.
Table~\ref{tab:sota} reports, per method, the number of exact recoveries
(relative error $< 10^{-8}$) and the median wall time.

\begin{table}[h]\centering
\caption{Exact recoveries (relative error $< 10^{-8}$) and median wall time on
$m = n = 10$ matrices with an exact size-$r$ NMF ($50$ trials; heuristics: best
of $25$ restarts $\times 500$ iterations). Cone-ray relative error is
$\sim\!10^{-15}$, HALS $\sim\!10^{-13}$; MU converges but plateaus near
$10^{-4}$ (its classic slow convergence), never reaching the $10^{-8}$ bar. On
this easy ($U[0,1]$, $m=n=10$) generator the cone-ray solvers, multi-start HALS,
and single-pass CD are all on par; CD (deterministic, $\sim\!10^{-16}$) matches
the union's recovery. The cone-ray solvers' distinguishing property here is not
recovery but the \emph{certificate} (feasibility is a proof, not a small
residual); their recovery relative to CD reverses at larger sizes and heavier
tails (Table~\ref{tab:hardsweep}), which is the transition of
\S\ref{sec:phase}.}\label{tab:sota}
\begin{tabular}{lcccccc}
\toprule
& \multicolumn{3}{c}{exact ($/50$)} & \multicolumn{3}{c}{median time (s)} \\
\cmidrule(lr){2-4}\cmidrule(lr){5-7}
method & $r{=}4$ & $r{=}5$ & $r{=}6$ & $r{=}4$ & $r{=}5$ & $r{=}6$ \\
\midrule
two-sided union (ours)    & $50$ & $50$ & $50$ & $0.01$ & $0.12$ & $0.12$ \\
one-sided solver (ours)   & $50$ & $49$ & $49$ & $0.01$ & $0.12$ & $0.12$ \\
hybrid witness (ours)     & $49$ & $46$ & $36$ & $0.16$ & $1.01$ & $12.3$ \\
CD (\textsc{nndsvda}, $1$ pass) & $50$ & $50$ & $48$ & $0.02$ & $0.03$ & $0.05$ \\
multi-start HALS          & $49$ & $47$ & $40$ & $1.6$  & $2.0$  & $0.9$ \\
multiplicative updates    & $0$  & $0$  & $0$  & ---    & ---    & --- \\
\bottomrule
\end{tabular}
\end{table}

The comparison against multi-start HALS across six distributions
(Table~\ref{tab:crossdist}; the union leads or ties on $13$ of $18$
(distribution, rank) cells, $761$ vs.\ $688$ of $900$ in aggregate) holds
\emph{on average}, but it is a comparison against a randomly-initialised
baseline, and it does not survive against CD. The honest picture is set by the
transition of \S\ref{sec:phase}. Because a cone-ray solver \emph{locates} a
witness by a budget-limited search over ray subsets, its recovery is exactly the
findability quantity of \S\ref{sec:phase}: high where the witness density is high
(small $m$, light tails, small $r$) and collapsing as the density decays.
Table~\ref{tab:hardsweep} makes this explicit against CD. The two are on par in
the easy corner --- on $U[0,1]$ the union even edges CD at $m=10$ --- but as $m$
grows and the tails heavy, the one-sided solver and the union fall off sharply
(to $0$--$1$ of $20$ at $m=20$) while CD is unaffected ($14$--$19$ of $20$);
already at $m=10, r=6$ the union trails CD on exponential factors ($9$ vs.\
$17$). The one-sided solver is uniformly weaker than the union, as expected of
the same family at a less favourable budget. Crucially, the union's recovery
\emph{climbs back} toward parity as its pool is enlarged
(Table~\ref{tab:hardbudget}: $1\!\to\!3\!\to\!15\!\to\!19$ of $20$ as the pool
grows $500\!\to\!300\text{k}$, at $8.9$\,s per instance versus CD's $0.05$\,s) ---
the budget-relativity of \S\ref{sec:phase} operating on the solver itself. The
witness \emph{exists} (Theorem~\ref{thm:complete}); locating it is what
transitions. The cone-ray solvers are therefore not competitive general-purpose
NMF solvers --- CD dominates on recovery and time outside the easy corner --- and
their contribution is instead the \emph{certificate}: feasibility
(Theorem~\ref{thm:onesided}) is a decidable proof that an exact size-$r$ NMF
exists, whereas a $10^{-13}$ or $10^{-16}$ residual leaves open whether $A$ is
exactly rank-$r$ factorable or merely close. The hybrid witness carries the
completeness picture (\S\ref{sec:witness}); the union is the practical
\emph{certifier} in the findable regime.

\begin{table}[h]\centering
\caption{Two-sided union vs.\ multi-start HALS across input distributions
(nonnegative factors $X, Y$ drawn i.i.d.; $m = n = 10$, $50$ trials, exact
recoveries at relative error $< 10^{-8}$, reported by $r = 4/5/6$). The union
leads in aggregate ($761$ vs.\ $688$ of $900$) and on $13$ of $18$ cells, most
strongly on heavy-tailed generators; HALS wins some $r = 6$
cells.}\label{tab:crossdist}
\begin{tabular}{lcc}
\toprule
distribution & union (ours), $r{=}4/5/6$ & HALS, $r{=}4/5/6$ \\
\midrule
uniform      & $50/50/50$ & $50/50/40$ \\
half-normal  & $46/50/47$ & $48/43/39$ \\
exponential  & $46/44/24$ & $42/33/26$ \\
chi-square   & $46/44/19$ & $33/27/21$ \\
log-normal   & $49/49/33$ & $42/37/28$ \\
beta$(.5,.5)$& $39/39/36$ & $47/39/43$ \\
\bottomrule
\end{tabular}
\end{table}

\begin{table}[h]\centering
\caption{Recovery of the cone-ray solvers vs.\ \textsc{nndsvda}-CD as size and
tail-heaviness grow (exact recoveries out of $20$; cone-ray at the fixed budget
of Table~\ref{tab:sota}, CD one deterministic pass). The cone-ray family wins
only in the easy corner (uniform, small $m$); as the witness density decays it
collapses --- earlier for heavier tails --- while CD is unaffected. One-sided
$\le$ union throughout.}\label{tab:hardsweep}
\begin{tabular}{llccc}
\toprule
distribution & $(m,r)$ & one-sided & union & CD (\textsc{nndsvda}) \\
\midrule
uniform      & $(10,6)$ & $19$ & $20$ & $17$ \\
uniform      & $(15,6)$ & $15$ & $17$ & $20$ \\
uniform      & $(20,7)$ & $0$  & $1$  & $19$ \\
exponential  & $(10,6)$ & $5$  & $9$  & $17$ \\
exponential  & $(15,6)$ & $0$  & $0$  & $17$ \\
log-normal   & $(10,6)$ & $9$  & $14$ & $14$ \\
log-normal   & $(20,7)$ & $0$  & $0$  & $14$ \\
\bottomrule
\end{tabular}
\end{table}

\begin{table}[h]\centering
\caption{The union's recovery is budget-relative: enlarging the candidate pool
recovers the (existing) witness, at rapidly growing cost, on a cell where the
fixed budget fails ($m=13, r=6$, log-normal; $20$ trials). CD recovers the same
instances in $\sim\!0.05$\,s. This is the transition of \S\ref{sec:phase}
governing the solver directly.}\label{tab:hardbudget}
\begin{tabular}{lcccc}
\toprule
pool size & $500$ & $5{,}000$ & $60{,}000$ & $300{,}000$ \\
\midrule
union recovery ($/20$) & $1$ & $3$ & $15$ & $19$ \\
median time (s)        & $0.05$ & $0.43$ & $4.31$ & $8.93$ \\
\bottomrule
\end{tabular}
\end{table}

\paragraph{Matched-compute frontier.}
Since recovery is now known to depend on the candidate-pool budget
(\S\ref{sec:phase}), the fair comparison fixes \emph{compute} rather than a
single operating point. Table~\ref{tab:frontier} sweeps the union's pool size
against HALS's restart count on the same matrices, and the union
\emph{Pareto-dominates multi-start HALS} in this easy $m=n=10$, $r=6$ regime. On
the uniform generator it recovers $199$ of $200$ in
$0.1$\,s where HALS needs $400$ restarts ($14$\,s) to reach $193$. On log-normal
--- where the union's stock pool is weakest --- a pool of $20000$ recovers $183/200$
in $0.4$\,s against HALS's $169/200$ in $3.4$\,s, and a pool of $60000$
($191/200$) exceeds HALS's best ($185/200$) at an order of magnitude less
compute. Honestly, at the smallest (stock) pool the union trails well-restarted
HALS on log-normal ($128$ vs.\ $169$ of $200$); the dominance is a
\emph{matched-compute} statement against HALS, and enlarging the pool --- cheap
next to restarting HALS --- is what secures it. It does \emph{not} extend to CD:
a single \textsc{nndsvda}-CD pass matches the union here at a fraction of the cost
(Table~\ref{tab:sota}), and past the findable regime (Table~\ref{tab:hardsweep})
CD dominates outright. The frontier's value is thus as a within-regime
budget-relativity check, not a claim of solver supremacy.

\begin{table}[h]\centering
\caption{Time--recovery frontier ($m = n = 10$, $r = 6$, $200$ trials;
per-instance single-process wall time): two-sided union at increasing
candidate-pool size vs.\ multi-start HALS at increasing restart count. The union
attains equal or higher recovery at an order of magnitude less wall time on both
a light- and a heavy-tailed generator.}\label{tab:frontier}
\begin{tabular}{lcc}
\toprule
method & exact ($/200$) & mean time \\
\midrule
\multicolumn{3}{l}{\emph{uniform}} \\
\quad union, pool $5000$   & $\mathbf{199}$ & $0.09$\,s \\
\quad union, pool $60000$  & $200$ & $0.95$\,s \\
\quad HALS $\times 100$    & $187$ & $3.5$\,s \\
\quad HALS $\times 400$    & $193$ & $13.9$\,s \\
\midrule
\multicolumn{3}{l}{\emph{log-normal}} \\
\quad union, pool $5000$   & $128$ & $0.14$\,s \\
\quad union, pool $20000$  & $\mathbf{183}$ & $0.42$\,s \\
\quad union, pool $60000$  & $191$ & $1.1$\,s \\
\quad HALS $\times 100$    & $169$ & $3.4$\,s \\
\quad HALS $\times 400$    & $185$ & $13.4$\,s \\
\bottomrule
\end{tabular}
\end{table}

\paragraph{Where the pipeline breaks.}
Enlarging $m, n$ at fixed $r$ \emph{hurts} rather than helps: the
success-rate curve shifts left by one $r$ from $m = n = 10$ to $m = n = 15$
(Table~\ref{tab:m15}; only $r = 4, 5$ remain reachable at the stock pool, and
doubling the \emph{walk depth} to top-$400$ recovers \emph{zero} additional
successes, indicating the remaining failures are feasibility-limited rather than
depth-limited, \S\ref{sec:search}), a budget-relative face of the transition of
\S\ref{sec:phase}: at $m = 15$ the median best obtuseness on failed trials falls
from $0.85$ at $r = 4$ to $0.15$ at $r = 8$, so at this pool the sampled rays
rarely contain a near-orthogonal $r$-subset. And on the Olivetti faces
($A \in \R^{400 \times 4096}_{\ge0}$) the pipeline never reaches the
obtuseness walk: the prerequisite DDM on the $n = 4096$ side times out at a
one-hour \texttt{cddlib} budget even at $r = 8$. The two operative walls are
thus feasibility (\S\ref{sec:phase}) and DDM cost; a successor pipeline must
sub-sample or randomise the extreme-ray enumeration.

\begin{table}[h]\centering
\caption{Witness success counts at $m = n = 15$ ($100$ trials/cell). The
transition of \S\ref{sec:phase} has shifted left by one $r$ relative to
$m = n = 10$: only $r = 4, 5$ remain reachable, and top-$400$ adds nothing over
top-$200$.}\label{tab:m15}
\begin{tabular}{lccccc}
\toprule
$r$ & $4$ & $5$ & $6$ & $7$ & $8$ \\
\midrule
top-$200$ & $37/100$ & $7/100$ & $0/100$ & $0/100$ & $0/100$ \\
top-$400$ & $37/100$ & $7/100$ & $0/100$ & $0/100$ & $0/100$ \\
\bottomrule
\end{tabular}
\end{table}

\section{The Gap Regime: A Negative Boundary}\label{sec:gap}
Everything so far concerns the uniform-rank case, nonnegative rank equal to the
matrix rank $r$. The natural extension is the \emph{gap} regime, nonnegative
rank $r_+ > r$: the size-$r_+$ factorization lives in an $r_+$-dimensional
gauge that \emph{extends} the $r$-dimensional SVD subspace by adjoining
$r_+ - r$ orthonormal directions from each null space,
\[
  U_{r_+}(G) = [\,\Ao \mid U_\perp G\,], \qquad
  V_{r_+}(K) = [\,\Aoo \mid V_\perp K\,],
\]
with $G \in \mathrm{St}(m-r,\,r_+ - r)$, $K \in \mathrm{St}(n-r,\,r_+ - r)$
Stiefel points, $U_\perp, V_\perp$ orthonormal bases of the null spaces, and
target core $\operatorname{diag}(S_r, 0)$. A \emph{valid} gauge is one whose
cone rays admit a nonnegative coupling to that core.

We report a \emph{negative} result. On genuine gap instances ---
regular-polygon slack matrices (hexagon: rank $3$, nonnegative rank $5$;
octagon: rank $3$, nonnegative rank $6$) and linear Euclidean distance matrices
--- neither the closed-form witness walk of \S\ref{sec:search} nor a Stiefel
finite-difference gradient descent on $(G, K)$ (minimising the coupling
residual, multi-start) finds a valid gauge: the relative residual plateaus well
above zero (Table~\ref{tab:gap}). A positive control indicates this is not merely a solver artifact --- on \emph{non-gap} matrices ($r_+$
exceeding the matrix rank while a factorization exists) the same descent
reaches machine zero. For a genuine gap no valid gauge exists at all --- that is what
$r_+ > r$ means --- so the residual is bounded away from zero; the coupling
residual is moreover piecewise across changes in the extreme-ray combinatorics,
so descent sees no basin to follow.

\begin{table}[h]\centering
\caption{Two-sided gauge search on gap matrices (Stiefel FD-GD, multi-start).
The residual reaches machine zero on a non-gap control but plateaus on every
genuine gap.}\label{tab:gap}
\resizebox{\columnwidth}{!}{%
\begin{tabular}{lccccc}
\toprule
matrix & rank $r$ & $r_+$ & genuine gap? & best residual & valid gauge found? \\
\midrule
non-gap (control) & 4 & 6 & no  & $1\cdot10^{-15}$ & yes \\
hexagon slack     & 3 & 5 & yes & $0.18$ & no \\
octagon slack     & 3 & 6 & yes & $0.17$ & no \\
LEDM, $n = 6$     & 3 & 5 & yes & $0.03$ & no \\
\bottomrule
\end{tabular}}
\end{table}

The plateaus in Table~\ref{tab:gap} are suggestive but not a proof: a primal
gauge search that fails to descend has not established non-existence.
Corollary~\ref{cor:gapcert} supplies the proof directly. Relaxing the coupling
of~\eqref{eq:constraint} to an unconstrained $\Delta\ge0$, infeasibility of the
resulting linear program certifies $r_+>r$, with the explicit dual witness $Y$
of Proposition~\ref{prop:gapdual}; on all three genuine gaps of
Table~\ref{tab:gap} the program is infeasible (for the hexagon,
$Y=\operatorname{diag}(-1,1,1)$ gives $R^{\!\top}YT\le0$, $\operatorname{tr}Y=1$).
The certificate is sound but one-sided: a \emph{feasible} program is
inconclusive, since the relaxation drops $\rank_+(\Delta)\le r$. Empirically it
fired on every gap in our test set --- linear Euclidean distance matrices,
regular and random polygons, and random polytope slacks through $r=5$ --- each
with an explicit dual witness $Y$. It cannot be complete, however: deciding the
gap is NP-hard \cite{Vavasis2009}, so any polynomial-time certificate must leave
some gaps inconclusive, even though ours left none in these families. It is the
linear-programming realization of restricted-nonnegative-rank feasibility
\cite{gillisglineur2012, CohenRothblum1993}, and its dual is a hyperplane
separator in the spirit of the nuclear-norm bound of \cite{fawziparrilo2016},
though binary and linear rather than quantitative and semidefinite.

Two lessons follow. First, the difficulty is the \emph{same} one: a valid gauge
exists iff its cone holds a coupling-compatible ray configuration --- the
gap-regime analogue of the findability event of \S\ref{sec:phase}. The gap
regime is thus not a new mechanism but findability pushed past $r_+ = r$.
Second, a methodological caution. A random dense construction
$M = W_*(H_1 H_2)$ with $W_* \in \Rp^{m \times r_+}$,
$H_1 \in \Rp^{r_+ \times r}$, $H_2 \in \Rp^{r \times n}$ \emph{looks} like a gap
instance but has none: $M = (W_* H_1)\,H_2$ is an exact size-$r$ nonnegative
factorization, so its nonnegative rank is $r$. Validating a gap method requires
instances with a \emph{provable} gap (slack matrices, distance matrices), not
random nonnegative products --- a subtlety easy to miss and worth flagging.

\section{Related Work}\label{sec:related}
Exact NMF and nonnegative rank are classical \cite{CohenRothblum1993,
Yannakakis1991}; under separability the exact problem is polynomial
\cite{DonohoStodden2003, AroraGeKannanMoitra2012, GillisVavasis2014}, whereas
we make no separability assumption. The geometric view --- factoring through
the SVD subspace's intersection with the orthant and its extreme rays --- is
that of the canonical/subspace-edge line \cite{pimentel2024canonical,
nguyen2025conecollapse}; we add the exact witness and the transition. The
phase-transition lens is conic integral geometry \cite{Amelunxen2014}: the
statistical dimension \cite{AmelunxenLotz2017} and its $O(\sqrt{d})$ width are
the natural first hypotheses, which we test and (for the location) reject,
redirecting the analysis to random-polytope \cite{Barany2008, Reitzner2010},
random-cone \cite{HugSchneider2016}, and Wendel/absorption \cite{Cover1965,
Wendel1962, Kabluchko2017} theory. On the algorithmic side the standard
approach is multi-start alternating updates \cite{lee2000algorithms,
vandaele2016heuristics, gillis2020book}, against which we position the witness
as a certifier rather than a competitor.

\section{Discussion}\label{sec:disc}
\paragraph{One problem, viewed twice.}
The two open quantities of this paper are, in fact, one. The two-sided witness
succeeds at a subset $S$ when the $r$ dual rows of $R_S^{-1}$ lie in a
\emph{single} simplicial cell of $C_2$; the one-sided solver
(\S\ref{sec:solver}) needs them \emph{anywhere} in $C_2$; and the findability
threshold (\S\ref{sec:phase}) is the point at which the random cone
$\col(W)\cap\R^m_{\ge0}$ ceases to hold a witness-compatible subset. All three
are governed by the same event --- the same-cell probability for $r$ coupled
dual points in a random cone. A closed form for it would simultaneously locate
$m_c(r)$ and bound the one-sided margin. Existence and representability are
settled (Theorem~\ref{thm:complete}); this random-cone quantity is the
frontier. The gap regime $r_+ > r$ is settled on the same footing: read
contrapositively, the completeness theorem makes non-existence \emph{decidable}
(Corollary~\ref{cor:gapcert}), so what remains open there too is not existence
but the finer question of when the linear relaxation is tight.

\paragraph{Limitations.}
The pipeline is confined to small $r$ by DDM cost (\S\ref{sec:exp}), and the
witness is a certifier, not a faster solver (\S\ref{sec:exp}); the threshold
$m_c(r)$ is characterised empirically and rigorously anchored at $r \le 2$
(Proposition~\ref{prop:r2}), but is not closed-form for $r \ge 3$; and the gap
certificate (Corollary~\ref{cor:gapcert}) is sound and fired on every gap we
tested, but is one-sided --- a feasible relaxation is inconclusive, and by the
NP-hardness of exact NMF cannot be made complete. A
successor that sub-samples or randomises the ray enumeration, and progress on
the random-cone quantity, are the natural next steps.

\section*{Acknowledgments}
Software drafts, prose, and the Monte-Carlo characterisation pipeline were
prepared with the assistance of Anthropic's Claude (Opus 4.8); the author
bears full responsibility for the content. Code and data are released at
\url{https://github.com/mithilr-sys/cone-nmf-exact} (archived on Zenodo).



\begin{thebibliography}{10}
\bibitem{Amelunxen2014}
D.~{Amelunxen}, M.~{Lotz}, M.~B. {McCoy}, {\sc and} J.~A. {Tropp},
\newblock {\em Living on the edge: phase transitions in convex programs
with random data}, \newblock Inf. Inference, 3 (2014), pp.~224--294.

\bibitem{Goemans1995}
M.~X. {Goemans} {\sc and} D.~P. {Williamson},
\newblock {\em Improved approximation algorithms for maximum cut and
satisfiability problems using semidefinite programming},
\newblock J. ACM, 42 (1995), pp.~1115--1145.

\bibitem{AmelunxenLotz2017}
D.~{Amelunxen} {\sc and} M.~{Lotz}, \newblock {\em Intrinsic volumes of
polyhedral cones: a combinatorial perspective}, \newblock Discrete Comput.
Geom., 58 (2017), pp.~371--409.

\bibitem{Barany2008}
I.~{B\'ar\'any}, \newblock {\em Random points and lattice points in convex
bodies}, \newblock Bull. Amer. Math. Soc., 45 (2008), pp.~339--365.

\bibitem{Reitzner2010}
M.~{Reitzner}, \newblock {\em Random polytopes}, \newblock in New
Perspectives in Stochastic Geometry, W.~S. Kendall and I.~Molchanov, eds.,
Oxford Univ. Press, 2010, pp.~45--76.

\bibitem{HugSchneider2016}
D.~{Hug} {\sc and} R.~{Schneider}, \newblock {\em Random conical
tessellations}, \newblock Discrete Comput. Geom., 56 (2016), pp.~395--426.

\bibitem{Wendel1962}
J.~G. {Wendel}, \newblock {\em A problem in geometric probability},
\newblock Math. Scand., 11 (1962), pp.~109--111.

\bibitem{Kabluchko2017}
Z.~{Kabluchko}, V.~{Vysotsky}, {\sc and} D.~{Zaporozhets},
\newblock {\em Convex hulls of random walks, hyperplane arrangements, and
Weyl chambers}, \newblock Geom. Funct. Anal., 27 (2017), pp.~880--918.

\bibitem{lee2000algorithms}
D.~D. {Lee} {\sc and} H.~S. {Seung},
\newblock {\em Algorithms for non-negative matrix factorization},
in Advances in Neural Information Processing Systems 13
(NeurIPS), 2000, pp.~556--562.

\bibitem{cichocki2009hals}
A.~{Cichocki} {\sc and} A.-H. {Phan},
\newblock {\em Fast local algorithms for large scale nonnegative matrix and
tensor factorizations},
\newblock IEICE Trans. Fundamentals, 92 (2009), pp.~708--721.

\bibitem{boutsidis2008nndsvd}
C.~{Boutsidis} {\sc and} E.~{Gallopoulos},
\newblock {\em SVD based initialization: A head start for nonnegative matrix
factorization},
\newblock Pattern Recognition, 41 (2008), pp.~1350--1362.

\bibitem{motzkin1953double}
T.~S. {Motzkin}, H.~{Raiffa}, G.~L. {Thompson}, {\sc and} R.~M.
{Thrall},
\newblock {\em The double description method}, in Contributions
to the Theory of Games, Volume II, Princeton University Press,
1953, pp.~51--73.

\bibitem{fukuda1996double}
K.~{Fukuda} {\sc and} A.~{Prodon},
\newblock {\em Double description method revisited}, in
Combinatorics and Computer Science, Lecture Notes in Computer
Science 1120, Springer, 1996, pp.~91--111.

\bibitem{vandaele2016heuristics}
A.~{Vandaele}, N.~{Gillis}, F.~{Glineur}, {\sc and} D.~{Tuyttens},
\newblock {\em Heuristics for exact nonnegative matrix
factorization},
\newblock Journal of Global Optimization, 65 (2016),
pp.~369--400.

\bibitem{gillis2020book}
N.~{Gillis},
\newblock {\em Nonnegative Matrix Factorization},
\newblock SIAM, Philadelphia, 2020.

\bibitem{pimentel2024canonical}
D.~L. {Pimentel-Alarc\'on},
\newblock {\em Nonnegative matrix factorization through canonical
edges},
\newblock OpenReview preprint (submitted to ICLR), 2024.

\bibitem{nguyen2025conecollapse}
M.~{Nguyen} {\sc and} D.~L. {Pimentel-Alarc\'on},
\newblock {\em Nonnegative matrix factorization through cone
collapse},
\newblock arXiv:2512.07879, 2025.

\bibitem{CohenRothblum1993}
J.~E. {Cohen} {\sc and} U.~G. {Rothblum}, \newblock {\em Nonnegative
ranks, decompositions, and factorizations of nonnegative matrices},
\newblock Linear Algebra Appl., 190 (1993), pp.~149--168.

\bibitem{gillisglineur2012}
N.~{Gillis} {\sc and} F.~{Glineur}, \newblock {\em On the geometric
interpretation of the nonnegative rank}, \newblock Linear Algebra Appl.,
437 (2012), pp.~2685--2712.

\bibitem{fawziparrilo2016}
H.~{Fawzi} {\sc and} P.~A. {Parrilo}, \newblock {\em Lower bounds on
nonnegative rank via nonnegative nuclear norms}, \newblock Math. Program.,
158 (2016), pp.~417--465.

\bibitem{Yannakakis1991}
M.~{Yannakakis}, \newblock {\em Expressing combinatorial optimization
problems by linear programs}, \newblock J. Comput. System Sci., 43 (1991),
pp.~441--466.

\bibitem{DonohoStodden2003}
D.~{Donoho} {\sc and} V.~{Stodden}, \newblock {\em When does non-negative
matrix factorization give a correct decomposition into parts?},
\newblock in Adv. Neural Inf. Process. Syst. 16 (NeurIPS), 2003.

\bibitem{AroraGeKannanMoitra2012}
S.~{Arora}, R.~{Ge}, R.~{Kannan}, {\sc and} A.~{Moitra},
\newblock {\em Computing a nonnegative matrix factorization --- provably},
\newblock in Proc. 44th STOC, 2012, pp.~145--162.

\bibitem{GillisVavasis2014}
N.~{Gillis} {\sc and} S.~A. {Vavasis}, \newblock {\em Fast and robust
recursive algorithms for separable nonnegative matrix factorization},
\newblock IEEE Trans. Pattern Anal. Mach. Intell., 36 (2014),
pp.~698--714.

\bibitem{Vavasis2009}
S.~A. {Vavasis}, \newblock {\em On the complexity of nonnegative matrix
factorization}, \newblock SIAM J. Optim., 20 (2009), pp.~1364--1377.

\bibitem{Cover1965}
T.~M. {Cover}, \newblock {\em Geometrical and statistical properties of
systems of linear inequalities with applications in pattern recognition},
\newblock IEEE Trans. Electron. Comput., EC-14 (1965), pp.~326--334.
\end{thebibliography}
\end{document}